\newtheorem{thm}{Theorem}[section]
\newtheorem{lem}[thm]{Lemma}
\newtheorem{prop}[thm]{Proposition}
\theoremstyle{definition}
\newtheorem{defn}[thm]{Definition}
\newtheorem{example}[thm]{Example}
\theoremstyle{remark}
\newtheorem{rem}[thm]{Remark}
\numberwithin{equation}{section}
\begin{document}
\title[Degenerate $C$-distribution cosine functions...]{Degenerate $C$-distribution cosine functions and
degenerate $C$-ultradistribution cosine functions
in locally convex spaces}

\author{Daniel Velinov}
\address{Department for Mathematics, Faculty of Civil Engineering, Ss. Cyril and Methodius University, Skopje,
Partizanski Odredi
24, P.O. box 560, 1000 Skopje, Macedonia}
\email{velinovd@gf.ukim.edu.mk}

\author{Marko Kosti\' c}
\address{Faculty of Technical Sciences,
University of Novi Sad,
Trg D. Obradovi\' ca 6, 21125 Novi Sad, Serbia}
\email{marco.s@verat.net}

\author{Stevan Pilipovi\' c}
\address{Department for Mathematics and Informatics,
University of Novi Sad,
Trg D. Obradovi\' ca 4, 21000 Novi Sad, Serbia}
\email{pilipovic@dmi.uns.ac.rs}

{\renewcommand{\thefootnote}{} \footnote{2010 {\it Mathematics
Subject Classification.} 47D03, 47D06, 47D60, 47D62, 47D99.
\\ \text{  }  \ \    {\it Key words and phrases.} Degenerate $C$-distribution cosine functions, degenerate $C$-ultradistribution cosine functions, degenerate $C$-distribution semigroups, degenerate $C$-ultradistribution semigroups,
degenerate integrated $C$-semigroups, degenerate integrated $C$-cosine functions, subgenerators, multivalued linear operators, locally convex spaces.
\\  \text{  }  \ \ This research is partially supported by grant 174024 of Ministry
of Science and Technological Development, Republic of Serbia.}}

\begin{abstract}
The main purpose of this paper is to investigate degenerate $C$-(ultra)distribution cosine functions in the setting of barreled
sequentially complete locally convex spaces.
In our approach, the infinitesimal generator of a degenerate $C$-(ultra)distribution cosine function is a multivalued linear operator
and the regularizing operator $C$ is not necessarily injective. We provide a few important theoretical novelties, considering also exponential subclasses of degenerate $C$-(ultra)distribution cosine functions.
\end{abstract}
\maketitle

\section{Introduction and Preliminaries}

We have recently analyzed in \cite{polugrupe-neijektivne-cds} and \cite{polugrupe-neijektivne-cds-prim}, the classes of degenerate $C$-distribution semigroups and degenerate $C$-ultradistribution semigroups in the setting of barreled
sequentially complete locally convex spaces. We refer to \cite{carol}, \cite{faviniyagi}, \cite{FKP},
\cite{me152} and \cite{svir-fedorov} for further information about well-posedness of abstract degenerate differential equations of first order.
In this way we continue the researches raised in \cite{kolje}, \cite{kolje1} and \cite{pm} (see also \cite{baskakov-chern}, \cite{ki90}, \cite{ku112}, \cite{isna-maiz} and
\cite{me152}-\cite{me155}).  The operator $C$ is not injective, in general.
The analysis of $C$-ultradistribution cosine functions is new even in non-degenerate case, with $C=I$ and the pivot space being one of Banach's, while the analysis of $C$-distribution cosine functions is new in locally convex spaces.

The organization of paper can be briefly described as follows.
Section 2 and Section 3 are devoted to degenerate $C$-distribution cosine functions and degenerate $C$-ultradistribution cosine functions as well as to connection of the degenerate $C$-distribution cosine functions and degenerate integrated $C$-cosine functions. Our theory is illustrated by the examples given in Section 4. In the Appendix are recollected the basic facts about fractionally integrated $C$-semigroups and fractionally integrated $C$-cosine functions in locally convex spaces.\\
We use the standard notation throughout the paper;
$E$ is a Hausdorff sequentially complete
locally convex space over the field of complex numbers, SCLCS for short.  For the sake of brevity
and better exposition, our standing assumption henceforth will be that
the state space $ E$ is barreled.
The exponential region $E(a,b)$ has been defined for the first time by W. Arendt, O. El--Mennaoui and V. Keyantuo in \cite{a22}:
$$
E(a,b):=\Bigl\{\lambda\in\mathbb{C}:\Re\lambda\geq b,\:|\Im\lambda|\leq e^{a\Re\lambda}\Bigr\} \ \ (a,\ b>0).
$$


The Schwartz spaces of test functions $\mathcal{D}=C_0^{\infty}(\mathbb{R})$, $\mathcal{E}=C^{\infty}(\mathbb{R})$ and ${\mathcal S}(\mathbb R)$
carry the usual topologies.
If $\Omega$ is a non-empty open set in ${\mathbb R},$ then the symbol $\mathcal{D}_{\Omega}$ denotes the subspace of $\mathcal{D}$ consisting of those functions $\varphi \in \mathcal{D}$ for which supp$(\varphi) \subseteq \Omega;$ $\mathcal{D}_{0}\equiv \mathcal{D}_{[0,\infty)}.$
The spaces
$\mathcal{D}'(E):=L(\mathcal{D},E)$,
$\mathcal{E}'(E):=L(\mathcal{E},E)$ and
$\mathcal{S}'(E):=L(\mathcal{S},E)$
are topologized in the usual way; the symbols
$\mathcal{D}'_{\Omega}(E)$,
$\mathcal{E}'_{\Omega}(E)$ and $\mathcal{S}'_{\Omega}(E)$ denote the subspaces of
$\mathcal{D}'(E)$, $\mathcal{E}'(E)$ and $\mathcal{S}'(E)$,
respectively, containing $E$-valued distributions
whose supports are contained in $\Omega ;$ $\mathcal{D}'_{0}(E)\equiv \mathcal{D}'_{[0,\infty)}(E)$, $\mathcal{E}'_{0}(E)\equiv \mathcal{E}'_{[0,\infty)}(E)$, $\mathcal{S}'_{0}(E)\equiv \mathcal{S}'_{[0,\infty)}(E).$
If $E={\mathbb C},$ then the above spaces are  the classical ones.
By a regularizing
sequence in $\mathcal{D}$ we mean any sequence $(\rho_n)_{n\in {\mathbb N}}$ in
$\mathcal{D}_0$ for which there exists a function $\rho\in\mathcal{D}$ satisfying $\int_{-\infty}^{\infty}\rho
(t)\,dt=1,$ supp$(\rho)\subseteq [0,1]$ and $\rho_n(t)=n\rho(nt)$,
$t\in\mathbb{R},$ $n\in {\mathbb N}.$
Let $\varphi,\psi\in L^1(0,\infty)$. Then the convolution products $\varphi*\psi$
and $\varphi*_0\psi$ are defined by
$$
\varphi*\psi(t):=\int\limits_{-\infty}^{\infty}\varphi(t-s)\psi(s)\,ds\mbox{ and }
\varphi*_0
\psi(t):=\int\limits^t_0\varphi(t-s)\psi(s)\,ds,\;t\;\in\mathbb{R}.
$$
Notice that $\varphi*\psi=\varphi*_0\psi$,if they are supported by $[0,\infty).$
Given $\varphi\in\mathcal{D}$ and $f\in\mathcal{D}'$, or $\varphi\in\mathcal{E}$ and $f\in\mathcal{E}'$,
we define the convolution $f*\varphi$ by $(f*\varphi)(t):=f(\varphi(t-\cdot))$, $t\in\mathbb{R}$.
For $f\in\mathcal{D}'$, or for $f\in\mathcal{E}'$,
define $\check{f}$ by $\check{f}(\varphi):=f(\varphi (-\cdot))$, $\varphi\in\mathcal{D}$ ($\varphi\in\mathcal{E}$).

Let $G$ be an $E$-valued distribution and let $f\in L^1_{\mbox{loc}}({\mathbb R},E)$.
Then $G^{(n)}$ ($n\in {\mathbb N}$) and $ hG$ ($h\in {\mathcal E}$); the regular $E$-valued distribution ${\mathbf f}$ is defined by ${\mathbf f}(\varphi):=\int_{-\infty}^{\infty}\varphi (t)  f(t) \, dt$ ($\varphi \in {\mathcal D}$).
The following lemma can be deduced as in the scalar-valued case.

\begin{lem}\label{polinomi}
Suppose that $0<\tau \leq \infty,$ $n\in {\mathbb N}$. If $f : (0,\tau) \rightarrow E$ is a continuous function and
$
\int^{\tau}_{0}\varphi^{(n)}(t)f(t)\, dt=0,\quad \varphi \in {\mathcal D}_{(0,\tau)}.
$
Then there exist elements $x_{0},\cdot \cdot \cdot, x_{n-1}$ in $E$ such that $f(t)=\sum^{n-1}_{j=0}t^{j}x_{j},$ $t\in (0,\tau).$
\end{lem}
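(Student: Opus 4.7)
The plan is to reduce to the scalar du Bois--Reymond lemma via the algebraic characterization of test functions lying in the image of $d^{n}/dt^{n}$ on $\mathcal{D}_{(0,\tau)}$. The key auxiliary fact I would establish first is: a function $\psi\in \mathcal{D}_{(0,\tau)}$ equals $\varphi^{(n)}$ for some $\varphi\in\mathcal{D}_{(0,\tau)}$ iff $\int_{0}^{\tau}t^{j}\psi(t)\,dt=0$ for $j=0,1,\dots,n-1$. The ``only if'' direction is integration by parts. For ``if'', the iterated primitive $\varphi(t):=\int_{-\infty}^{t}\frac{(t-s)^{n-1}}{(n-1)!}\psi(s)\,ds$ has $\varphi^{(n)}=\psi$, and expanding $(t-s)^{n-1}$ shows that the values of $\varphi$ to the right of $\mathrm{supp}(\psi)$ form a polynomial in $t$ whose coefficients are (up to sign) precisely the moments $\int s^{k}\psi(s)\,ds$ for $0\le k\le n-1$; so they vanish and $\varphi$ is compactly supported in $(0,\tau)$.

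Next, since the linear functionals $\psi\mapsto\int_{0}^{\tau}t^{j}\psi(t)\,dt$ on $\mathcal{D}_{(0,\tau)}$ are linearly independent, I would choose a dual basis $\psi_{0},\dots,\psi_{n-1}\in\mathcal{D}_{(0,\tau)}$ with $\int_{0}^{\tau}t^{j}\psi_{k}(t)\,dt=\delta_{jk}$, and define
$$
x_{k}:=\int_{0}^{\tau}\psi_{k}(t)f(t)\,dt\in E\qquad(0\le k\le n-1),
$$
which is a well-defined $E$-valued Riemann integral, as $\psi_{k}f$ is continuous with compact support in $(0,\tau)$ and $E$ is sequentially complete.

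Now, given any $\psi\in\mathcal{D}_{(0,\tau)}$, set $c_{k}:=\int_{0}^{\tau}t^{k}\psi(t)\,dt$; then $\tilde{\psi}:=\psi-\sum_{k=0}^{n-1}c_{k}\psi_{k}$ has vanishing moments up to order $n-1$, hence $\tilde{\psi}=\varphi^{(n)}$ for some $\varphi\in\mathcal{D}_{(0,\tau)}$. The hypothesis gives $\int_{0}^{\tau}\tilde{\psi}(t)f(t)\,dt=0$, so
$$
\int_{0}^{\tau}\psi(t)f(t)\,dt=\sum_{k=0}^{n-1}c_{k}x_{k}=\int_{0}^{\tau}\psi(t)\sum_{k=0}^{n-1}t^{k}x_{k}\,dt.
$$
Thus the continuous function $g(t):=f(t)-\sum_{k=0}^{n-1}t^{k}x_{k}$ satisfies $\int_{0}^{\tau}\psi(t)g(t)\,dt=0$ for every $\psi\in\mathcal{D}_{(0,\tau)}$.

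Finally, I would invoke the vector-valued du Bois--Reymond principle: for every $\xi\in E'$, the scalar continuous function $\xi\circ g$ on $(0,\tau)$ annihilates all of $\mathcal{D}_{(0,\tau)}$, so the classical lemma gives $\xi\circ g\equiv 0$; as $E$ is a Hausdorff locally convex space, $E'$ separates points and $g\equiv 0$, yielding $f(t)=\sum_{j=0}^{n-1}t^{j}x_{j}$. The only genuinely technical step is the moment characterization of $n$-th derivatives of compactly supported test functions; the remainder is linear algebra on $\mathcal{D}_{(0,\tau)}$ together with the scalar case applied coordinatewise through $E'$.
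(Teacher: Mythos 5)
Your proposal is correct. The paper gives no proof of this lemma at all --- it merely asserts that it ``can be deduced as in the scalar-valued case'' --- and your argument is a correct and complete implementation of exactly that reduction: the moment characterization of the image of $d^{n}/dt^{n}$ on $\mathcal{D}_{(0,\tau)}$, a dual basis $\psi_{0},\dots,\psi_{n-1}$ to produce the vectors $x_{k}\in E$ (well defined since $E$ is sequentially complete), and the passage to the scalar du Bois--Reymond lemma through functionals $\xi\in E'$, which separate points because $E$ is Hausdorff locally convex.
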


Let $\tau>0,$ and let $X$ be a general Hausdorff locally convex space (not necessarily sequentially complete). Following \cite{sch16}, $G\in {\mathcal D}'(X)$ is of finite order on the interval $(-\tau,\tau)$ iff
there exist an integer $n\in {\mathbb N}_{0}$ and an $X$-valued continuous function $f : [-\tau,\tau] \rightarrow X$
such that
$
G(\varphi)=(-1)^{n}\int^{\tau}_{-\tau}\varphi^{(n)}(t)f(t)\, dt,\quad \varphi \in {\mathcal D}_{(-\tau,\tau)};
$
$G$ is of finite order iff $G$ is of finite order on any finite interval $(-\tau,\tau).$
In the case that $X$ is a quasi-complete (DF)-space, then each $X$-valued distribution is of finite order.

Henceforth we assume that $(M_p)$ is a sequence of positive real numbers
such that $M_0=1$ and the following conditions hold:\vspace{0.1cm} \newline \noindent
(M.1):
$
M_p^2\leq M_{p+1} M_{p-1},\;\;p\in\mathbb{N};
$\\
(M.2):
$
M_p\leq AH^p\sup_{0\leq i\leq p}M_iM_{p-i},\;\;p\in\mathbb{N},\mbox{ for some }A,\ H>1;$\\
(M.3)':
$
\sum_{p=1}^{\infty}\frac{M_{p-1}}{M_p}<\infty.
$\\
Every employment of the condition\vspace{0.1cm} \newline \noindent
(M.3):
$
\sup_{p\in\mathbb{N}}\sum_{q=p+1}^{\infty}\frac{M_{q-1}M_{p+1}}{pM_pM_q}<\infty,
$\\
which is stronger than (M.3)', will be explicitly emphasized.

The associated
function of sequence $(M_p)$ is defined by
$M(\rho):=\sup_{p\in\mathbb{N}}\ln\frac{\rho^p}{M_p}$,
$\rho>0$; $M(0):=0,$ $M(\lambda):=M(|\lambda|),$
$\lambda\in\mathbb{C} \setminus [0,\infty).$

The spaces of Beurling,
respectively, Roumieu ultradifferentiable functions are
defined by $\mathcal{D}^{(M_p)}:=\mathcal{D}^{(M_p)}(\mathbb{R})
:=\text{indlim}_{K\Subset\Subset\mathbb{R}}\mathcal{D}^{(M_p)}_K$,
respectively,
$\mathcal{D}^{\{M_p\}}:=\mathcal{D}^{\{M_p\}}(\mathbb{R})
:=\text{indlim}_{K\Subset\Subset\mathbb{R}}\mathcal{D}^{\{M_p\}}_K$, (where $K$ goes through all compact sets in ${\mathbb R}$
where
$\mathcal{D}^{(M_p)}_K:=\text{projlim}_{h\to\infty}\mathcal{D}^{M_p,h}_K$,
respectively, $\mathcal{D}^{\{M_p\}}_K:=\text{indlim}_{h\to 0}\mathcal{D}^{M_p,h}_K$,
\begin{align*}
\mathcal{D}^{M_p,h}_K:=\bigl\{\phi\in C^{\infty}(\mathbb{R}): \text{supp}(\phi) \subseteq K,\;\|\phi\|_{M_p,h,K}<\infty\bigr\},
\end{align*}

\begin{align*}
\|\phi\|_{M_p,h,K}:=\sup\Biggl\{\frac{h^p\bigl|\phi^{(p)}(t)\bigr|}{M_p} : t\in K,\;p\in\mathbb{N}_0\Biggr\}.
\end{align*}
Henceforth the asterisk $*$ stands for both cases.

The spaces of tempered ultradistributions of the Beurling,
resp. the Roumieu type, are defined in \cite{pilip} (cf. also \cite{carmic}) as duals of the corresponding test spaces
$
\mathcal{S}^{(M_p)}(\mathbb{R}):=\text{projlim}_{h\to\infty}\mathcal{S}^{M_p,h}(\mathbb{R}),
\mbox{ resp. }\mathcal{S}^{\{M_p\}}(\mathbb{R}):=\text{indlim}_{h\to 0}\mathcal{S}^{M_p,h}(\mathbb{R}),
$
where
$
\mathcal{S}^{M_p,h}(\mathbb{R}):=\bigl\{\phi\in C^\infty(\mathbb{R}):\|\phi\|_{M_p,h}<\infty\bigr\},\;h>0,$

$$\|\phi\|_{M_p,h}:=\sup\Biggl\{\frac{h^{\alpha+\beta}}{M_\alpha M_\beta}(1+t^2)^{\beta/2}|\phi^{(\alpha)}(t)|:t\in\mathbb{R},
\;\alpha,\;\beta\in\mathbb{N}_0\Biggr\}.
$$

Let $\emptyset \neq \Omega \subseteq {\mathbb R}.$
As in the case of distributions, put
$\mathcal{D}'^*(E):=L(\mathcal{D}^*, E)$, $\mathcal{S}'^*(E):=L(\mathcal{S}^{*}, E),$ $\mathcal{D}^{*}_{\Omega}$, $\mathcal{D}^{\ast}_0$, $\mathcal{E}'^{*}_{\Omega}$, $\mathcal{E}'^{*}_{0}$, $\mathcal{D}'^{*}_{\Omega}(E)$,  $\mathcal{D}'^{*}_{0}(E)$.
The multiplication by a function $a\in {\mathcal E}^{\ast}(\Omega)$,
convolution of scalar valued ultradistributions (ultradifferentiable
functions), and the notion of a regularizing sequence in $\mathcal{D}^*,$ are defined as in the case of
distributions.\\
\indent

Let $\eta\in\mathcal{D}_{[-2,-1]}$  ($\eta\in \mathcal{D}^*_{[-2,-1]}$) be a fixed test function satisfying $\int_{-\infty}^{\infty}\eta (t)\,dt=1$.
Then, for every fixed $\varphi\in\mathcal{D}$ ($\varphi\in\mathcal{D}^*$), we define the antiderivative $I(\varphi)$:
$$
I(\varphi)(x):=\int\limits_{-\infty}^x
\Biggl[\varphi(t)-\eta(t)\int\limits_{-\infty}^{\infty}\varphi(u)\,du\Biggr]\,dt,
\;\;x\in\mathbb{R}.
$$
For every $\varphi\in\mathcal{D}$ ($\varphi\in\mathcal{D}^*$) and $n\in {\mathbb N},$
$I(\varphi)\in\mathcal{D}$ ($I(\varphi) \in\mathcal{D}^*$), $I^{n}(\varphi^{(n)})=\varphi ,$
$\frac{d}{dx}I(\varphi)(x)=\varphi(x)-\eta(x)\int_{-\infty}^{\infty}\varphi(u)\,du$, $x\in\mathbb{R}$ as well as that, for every $\varphi\in\mathcal{D}_{[a,b]}$  ($\varphi\in\mathcal{D}_{[a,b]}^{\ast}$), where $-\infty<a<b<\infty$, we have:
$
\text{supp}( I(\varphi))\subseteq[\min(-2,a),\max(-1,b)].
$
This simply implies that, for every $\tau>2,$ $-1<b<\tau$  and for every $m,\ n\in {\mathbb N}$ with $m\leq n,$ we have: $I^0(\varphi):=\varphi$, $\varphi\in{\mathcal D}$ and
\begin{align}\label{jednazba-zaat}
I^{n}\bigl(\mathcal{D}_{(-\tau,b]}\bigr)\subseteq \mathcal{D}_{(-\tau,b]}\mbox{ and }
\frac{d^{m}}{dx^{m}}I^{n}(\varphi)(x)=I^{m-n}\varphi(x),\quad \varphi \in {\mathcal D} \ (\varphi\in\mathcal{D}^*) ,\ x\geq 0.
\end{align}
Define now $G^{-1}$ by
\begin{equation}\label{broj}
G^{-1}(\varphi):=-G(I(\varphi)),\quad \varphi\in\mathcal{D}\,\,\, (\varphi\in\mathcal{D}^*).\end{equation}
It is well known that
$G^{-1}\in\mathcal{D}'(L(E))$ and $(G^{-1})'=G$;
more precisely, $-G^{-1}(\varphi')=G(I(\varphi'))=G(\varphi)$, $\varphi\in\mathcal{D}$. The convergence $\varphi_{n}\rightarrow \varphi ,$ $n\rightarrow \infty$
in $\mathcal{D}_{K}^{M_{p},h}$ implies the convergence $I(\varphi_{n}) \rightarrow I(\varphi) ,$ $n\rightarrow \infty$
in $\mathcal{D}_{K'}^{M_{p},h},$ where  $K'=[\min(-2,\inf(K)), \max(-1, \sup(K))]$, the same holds in ultradistributional case. In both cases, $\text{supp}(G)\subseteq[0,\infty)\Rightarrow \text{supp} (G^{-1})\subseteq[0,\infty)$.

\section[The basic properties of degenerate $C$-distribution...]{The basic properties of degenerate $C$-distribution cosine functions and
degenerate $C$-ultradistribution cosine functions
in locally convex spaces}\label{maxx-duo}

Throughout this section, we assume that $E$ is a barreled SCLCS and that $C\in L(E)$ is not necessarily injective operator. We introduce the notions of pre$-(C-DCF)$ and $(C-DCF)$ (pre$-(C-UDCF)$ of $\ast$-class and $(C-UDCF)$ of $\ast$-class) as follows:

\begin{defn}\label{4cd}
An element ${\mathbf G}\in {\mathcal
D}_{0}^{\prime }(L(E))$ (${\mathbf G}\in {\mathcal
D}_{0}^{\prime \ast}(L(E))$) is called a pre$-(C-DCF)$ (pre$-(C-UDCF)$ of $\ast$-class) iff ${\mathbf
G}(\varphi)C=C{\mathbf G}(\varphi),$ $\varphi \in {\mathcal D}$ ($\varphi \in {\mathcal D}^\ast$) and
\[
(CCF_{1}) : {\mathbf G}^{-1}(\varphi \ast _{0}\psi )C={\mathbf
G}^{-1}(\varphi ){\mathbf G}(\psi )+{\mathbf G}(\varphi ){\mathbf
G}^{-1}(\psi ),\quad \varphi ,\ \psi \in {\mathcal D} \ \ (\varphi ,\ \psi \in {\mathcal D}^\ast);
\]
if, additionally,
\[
(CCF_{2}):\qquad x=y=0\;\mbox{iff}\;\text{\ }{\mathbf G}(\varphi
)x+{\mathbf G}^{-1}(\varphi )y=0,\quad  \varphi \in {\mathcal D}_{0} \ \ (\varphi \in {\mathcal D}^{\ast}_{0}),
\]
then ${\mathbf G}$ is called a $C$-distribution cosine function ($C$-ultradistribution cosine function  of $\ast$-class), in\index{$C$-distribution cosine functions}
short $(C-DCF)$  ($(C-UDCF)$  of $\ast$-class). A pre$-(C-DCF)$ (pre-$(C-UDCF)$  of $\ast$-class) ${\mathbf G}$ is called dense iff\index{$C$-distribution cosine functions!dense}
the set ${\mathcal R}({\mathbf G}):=\bigcup_{\varphi \in {\mathcal
D}_{0}}R({\mathbf G}(\varphi))$ (${\mathcal R}({\mathbf G}):=\bigcup_{\varphi \in {\mathcal
D}_{0}^{\ast}}R({\mathbf G}(\varphi))$) is dense in $E.$
\end{defn}

It is clear that $(CCF_{2})$ implies ${\mathcal N}({\mathbf G}):=\bigcap_{\varphi \in {\mathcal D}%
_{0}}N({\mathbf G}(\varphi ))=\{0\}$ and \\
$\bigcap_{\varphi \in {\mathcal D}%
_{0}}N({\mathbf G}^{-1}(\varphi ))=\{0\},$ and that the
assumption ${\mathbf G}\in {\mathcal D}_{0}^{\prime }(L(E))$ implies
${\mathbf G}(\varphi )=0,$ $\varphi \in {\mathcal D} _{(-\infty , 0]}.$
For $\psi \in {\mathcal D},$ we set $\psi_{+}(t):=\psi(t)H(t),$ $t\in {\mathbb R},$
where $H(t)$ denotes the Heaviside function.
Then
$\psi_{+}\in {\mathcal E}^{\prime}_{0},$ $\psi \in {\mathcal D}$
and $\varphi \ast \psi_{+}\in {\mathcal D}_{0}$ for any $\varphi \in {\mathcal D}_{0}.$ The above holds in ultradistributional case, as well.

The following proposition is essential.

\begin{prop}\label{5cp-deg}
Let ${\mathbf G}\in {\mathcal D}_{0}^{\prime }(L(E))$ (${\mathbf G}\in {\mathcal
D}_{0}^{\prime \ast}(L(E))$) and
${\mathbf G}(\cdot)C=C{\mathbf G}(\cdot).$ Then ${\mathbf G}$ is a pre-(C-DCF) in $E$ (pre-(C-UDCF)  of $\ast$-class  in $E$) iff $${\mathcal
G}\equiv \left(
\begin{array}{cc}
{\mathbf G} & {\mathbf G}^{-1} \\
{\mathbf G}^{\prime }-\delta \otimes C & {\mathbf G}
\end{array}
\right) $$ is a pre-(${\mathcal C}$-DS) in $E\oplus E$ (pre-(${\mathcal C}$-UDS)  of $\ast$-class in $E\oplus E$), where $${\mathcal
C}\equiv \left(
\begin{array}{cc}
C & 0 \\
0 & C
\end{array}
\right).$$ Moreover, ${\mathcal G}$ is a (${\mathcal C}$-DS) ((${\mathcal C}$-UDS)  of $\ast$-class) iff
${\mathbf G}$ is a pre-(C-DCF) (pre-(C-UDCF)  of $\ast$-class) which satisfies $(CCF_{2}).$
\end{prop}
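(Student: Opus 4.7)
The plan is to run the standard d'Alembert-type reduction that trades a cosine equation for a first order semigroup equation, and to verify the correspondence entry by entry in the $2\times 2$ matrix identity defining a pre-$(\mathcal{C}\text{-DS})$. The decisive technical input throughout is that every $\varphi\in\mathcal{D}_0$ (respectively $\varphi\in\mathcal{D}^{\ast}_0$) vanishes to infinite order at $0$, so $\varphi(0)=0$; in particular the $\delta\otimes C$ entry of $\mathcal{G}$ is invisible against such test functions, and $\mathbf{G}'-\delta\otimes C$ coincides with $\mathbf{G}'$ on $\mathcal{D}_0$. The commutation $\mathcal{G}(\varphi)\mathcal{C}=\mathcal{C}\mathcal{G}(\varphi)$ is immediate: the hypothesis $\mathbf{G}(\varphi)C=C\mathbf{G}(\varphi)$ transfers to $\mathbf{G}^{-1}$ via \eqref{broj} and to $\mathbf{G}'$ via distributional differentiation, while $\delta\otimes C$ commutes with $C$ trivially.

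Next I would expand both sides of $\mathcal{G}(\varphi\ast_0\psi)\mathcal{C}=\mathcal{G}(\varphi)\mathcal{G}(\psi)$ into $2\times 2$ matrices. The $(1,2)$-entry of that equality is literally $(CCF_1)$, which already gives the direction pre-$(\mathcal{C}\text{-DS})\Rightarrow$ pre-$(C\text{-DCF})$. For the reverse direction I would derive the remaining three entries from $(CCF_1)$ by substituting $\varphi\mapsto\varphi'$ or $\psi\mapsto\psi'$ and invoking $\mathbf{G}^{-1}(\varphi')=-\mathbf{G}(\varphi)$ and $\mathbf{G}(\varphi')=-\mathbf{G}'(\varphi)$, which follow from $(\mathbf{G}^{-1})'=\mathbf{G}$, together with $(\varphi\ast_0\psi)'=\varphi'\ast_0\psi=\varphi\ast_0\psi'$ for $\varphi,\psi\in\mathcal{D}_0$ (no boundary delta, because the test functions are flat at $0$). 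The $(1,1)$ and $(2,2)$ entries each collapse to $(CCF_1)$ after one such substitution, and the $(2,1)$ entry collapses after differentiating the already-established $(1,1)$ identity in $\varphi$.

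For the second assertion I would unwind the kernel conditions. Since the first component of $\mathcal{G}(\varphi)\begin{pmatrix} x\\ y\end{pmatrix}$ is exactly $\mathbf{G}(\varphi)x+\mathbf{G}^{-1}(\varphi)y$, condition $(CCF_2)$ forces the $(\mathcal{C}\text{-DS})$ kernel condition. Conversely, assuming $\mathbf{G}(\varphi)x+\mathbf{G}^{-1}(\varphi)y=0$ on $\mathcal{D}_0$, applying this identity to $\varphi'$ and using $\mathbf{G}^{-1}(\varphi')=-\mathbf{G}(\varphi)$ yields $\mathbf{G}'(\varphi)x+\mathbf{G}(\varphi)y=0$; combined with $\varphi(0)=0$ this shows that the second component $[\mathbf{G}'(\varphi)-\varphi(0)C]x+\mathbf{G}(\varphi)y$ of $\mathcal{G}(\varphi)\begin{pmatrix} x\\ y\end{pmatrix}$ also vanishes, so the $(\mathcal{C}\text{-DS})$ kernel condition forces $x=y=0$, i.e.\ $(CCF_2)$. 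The main obstacle I anticipate is purely bookkeeping in the matrix-entry verification, namely keeping straight which of $\mathbf{G}'$ and $\mathbf{G}'-\delta\otimes C$ appears where and using the flatness $\varphi(0)=\psi(0)=0$ at the right moments; the ultradistributional case should then go through verbatim with $\mathcal{D}^{\ast}_0$ replacing $\mathcal{D}_0$, since distributional differentiation, the convolution $\ast_0$, and flatness at $0$ are all available in the ultradifferentiable setting.
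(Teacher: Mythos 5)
Your overall strategy is the paper's: reduce to the $2\times 2$ matrix identity $\mathcal{G}(\varphi\ast_0\psi)\mathcal{C}=\mathcal{G}(\varphi)\mathcal{G}(\psi)$, observe that its $(1,2)$-entry is exactly $(CCF_1)$, and obtain the remaining entries by differentiating the convolution; the treatment of $(CCF_2)$ versus the non-degeneracy condition (C.S.2) is also the same as in the paper and is correct. However, there is a genuine error in the central computational step. Both $(CCF_1)$ and the pre-$(\mathcal{C}$-DS) functional equation are required for \emph{all} $\varphi,\psi\in\mathcal{D}$ (resp.\ $\mathcal{D}^{\ast}$), not only for $\varphi,\psi\in\mathcal{D}_0$. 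For general test functions one has
\begin{equation*}
\bigl(\varphi\ast_0\psi\bigr)'=\varphi'\ast_0\psi+\varphi(0)\psi=\varphi\ast_0\psi'+\psi(0)\varphi ,
\end{equation*}
and the boundary terms $\varphi(0)\psi$, $\psi(0)\varphi$ do not vanish. Your plan explicitly discards them (``no boundary delta, because the test functions are flat at $0$''), but flatness at $0$ is only available on $\mathcal{D}_0$, and checking the matrix identity only against $\mathcal{D}_0$ is not enough: on $\mathcal{D}_0$ the entry $\mathbf{G}'-\delta\otimes C$ is indistinguishable from $\mathbf{G}'$, whereas the matrix with $\mathbf{G}'$ in the $(2,1)$ slot does \emph{not} satisfy the semigroup equation on all of $\mathcal{D}$. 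The correct derivation, as in the paper, keeps the terms $\varphi(0)$, $\psi(0)$; they are precisely cancelled by the $\delta\otimes C$ contributions, e.g.\ the $(1,1)$-entry reads $\mathbf{G}(\varphi\ast_0\psi)C=\mathbf{G}(\varphi)\mathbf{G}(\psi)+\mathbf{G}^{-1}(\varphi)(\mathbf{G}'-\delta\otimes C)(\psi)$, and the summand $-\psi(0)C\mathbf{G}^{-1}(\varphi)$ cannot be dropped when $\psi(0)\neq 0$. So your claim that the $(1,1)$, $(2,2)$ and $(2,1)$ entries ``collapse to $(CCF_1)$'' after substitution is false as stated; you need to redo the chain i)$\,\Rightarrow\,$ii)$\,\Rightarrow\,$iii) carrying the boundary terms, which is exactly where the $\delta\otimes C$ block earns its place in $\mathcal{G}$.
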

\begin{proof}
By a simple calculation we have that ${\mathcal G}$ satisfies ${\mathcal G}(\varphi{\ast}_0\psi)C={\mathcal G}(\varphi){\mathcal G}(\psi)$, for $\varphi,\psi\in{\mathcal D}$, iff the following holds
\begin{itemize}
\item[i)] ${\mathbf G}^{-1}(\varphi{\ast}_0\psi)C={\mathbf G}^{-1}(\varphi){\mathbf G}(\psi)+{\mathbf G}(\varphi){\mathbf G}^{-1}(\psi);$
\item[ii)] ${\mathbf G}(\varphi{\ast}_0\psi)C={\mathbf G}(\varphi){\mathbf G}(\psi)+{\mathbf G}^{-1}(\varphi)({\mathbf G}'-\delta\otimes C)(\psi);$
\item[iii)] ${\mathbf G}'(\varphi{\ast}_0\psi)C=({\mathbf G}'-\delta\otimes C){\mathbf G}(\psi)+{\mathbf G}(\varphi)({\mathbf G}'-\delta\otimes C)(\psi),$  for $\varphi, \psi \in {\mathcal D}$.
\end{itemize}
It will be proven here that $i)\Rightarrow ii)\Rightarrow iii)$. Let i) holds. By
$(\varphi{\ast}_0\psi)'={\varphi}'{\ast}_0{\psi}+\varphi(0)\psi={\varphi}{\ast}_0{\psi}'+\psi(0)\varphi$, for $\varphi,\psi\in{\mathcal D}$, we have
$${\mathbf G}({\varphi}{\ast}_0{\psi})C=-{\mathbf G}^{-1}((\varphi{\ast}_0\psi)')C=-{\mathbf G}^{-1}(\varphi{\ast}_0{\psi}'+\psi(0)\varphi)C=$$
$$=-({\mathbf G}^{-1}(\varphi){\mathbf G}({\psi}')+{\mathbf G}(\varphi){\mathbf G}^{-1}({\psi}')+\delta(\psi)C{\mathbf G}^{-1}(\varphi))=$$
$$={\mathbf G}^{-1}(\varphi){\mathbf G}'(\psi)+{\mathbf G}(\varphi){\mathbf G}(\psi)-\delta(\psi)C{\mathbf G}^{-1}(\varphi),\quad \mbox{for}\,\, \varphi,\psi\in{\mathcal D}.$$
Now, let ii) holds. Then
$${\mathbf G}'({\varphi}{\ast}_0{\psi})C=-{\mathbf G}(({\varphi}{\ast}_0{\psi})')C=-{\mathbf G}({\varphi}'{\ast}_0{\psi}+\varphi(0){\psi})C=$$ $$=-({\mathbf G}({\varphi}'){\mathbf G}(\psi)+{\mathbf G}^{-1}({\varphi}')({\mathbf G}'-\delta\otimes C)(\psi)+\delta(\varphi){\mathbf G}(\psi)C)=$$ $$=({\mathbf G}'-\delta\otimes C)(\varphi){\mathbf G}(\psi)+{\mathbf G}(\varphi)({\mathbf G}'-\delta\otimes C)(\psi),\quad \varphi,\psi\in{\mathcal D},$$ so we obtain iii).
If ${\mathbf G}$ satisfies $(CCF_2)$ then ${\mathcal G}$ satisfies the non-degeneracy condition (C.S.2) (see \cite{polugrupe-neijektivne-cds}). Let ${\mathcal G}$ satisfies (C.S.2). We will prove that ${\mathbf G}$ satisfies $(CCF_2)$. We assume that $x,y\in E$ and ${\mathbf G}(\varphi)x+{\mathbf G}^{-1}(\varphi)y=0$, $\varphi\in{\mathcal D}_0$. One gets that
$$({\mathbf G}'-\delta)(\varphi)x+{\mathbf G}(\varphi)y=-{\mathbf G}({\varphi}')x-\varphi(0)x=-{\mathbf G}^{-1}({\varphi}')y=0,\quad \varphi\in{\mathcal D}_0.$$ Since ${\mathcal G}$ satisfies (C.S.2), then $x=y=0$, so ${\mathbf G}$ satisfies $(CCF_2)$. The proof for ultradistribution case can be given analogously.
\end{proof}

We can prove the following generalization of \cite[Proposition 3.2.4(ii)]{knjigaho}.

\begin{prop}\label{5cp-deg-deg}
Let ${\mathbf G}\in {\mathcal D}_{0}^{\prime }(L(E))$ (${\mathbf G}\in {\mathcal D}_{0}^{\prime \ast}(L(E))$) and
${\mathbf G}(\cdot)C=C{\mathbf G}(\cdot).$ Then the following holds:
\begin{itemize}
\item[(i)] If
${\mathbf G}$ is a pre-(C-DCF)
(pre-(C-UDCF)  of $\ast$-class), then
\begin{align}\label{kisin-cdf}
{\mathbf G}^{-1}\bigl(\varphi \ast \psi _{+}\bigr)C={\mathbf G}^{-1}(\varphi
){\mathbf G}(\psi )+{\mathbf G}(\varphi ){\mathbf G}^{-1}(\psi
),\text{ }\varphi \in {\mathcal D}_{0},\text{ }\psi \in {\mathcal D}.
\end{align}
\item[(ii)] If $(CCF_{2})$ and (\ref{kisin-cdf}) hold, then ${\mathbf G}$ is a (C-DCF)
((C-UDCF) of $\ast$-class).
\end{itemize}
\end{prop}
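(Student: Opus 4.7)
Everything turns on the elementary identity $\varphi *_{0} \psi = \varphi * \psi_{+}$, valid for $\varphi \in \mathcal{D}_{0}$ (or $\mathcal{D}_{0}^{\ast}$) and $\psi \in \mathcal{D}$ (or $\mathcal{D}^{\ast}$), which follows by inspecting supports on each half-line, together with the fact that $\chi *_{0} \varphi = \chi * \varphi_{+} \in \mathcal{D}_{0}$ whenever $\chi \in \mathcal{D}_{0}$ and $\varphi \in \mathcal{D}$.  Part (i) is then immediate: $(CCF_{1})$ is assumed for all $\varphi,\psi \in \mathcal{D}$, so specialising the first variable to $\mathcal{D}_{0}$ and rewriting the left-hand side as $\mathbf{G}^{-1}(\varphi * \psi_{+})C$ yields precisely (\ref{kisin-cdf}).

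For part (ii), the task is to upgrade (\ref{kisin-cdf}) — which is $(CCF_{1})$ only when the first variable lies in $\mathcal{D}_{0}$ — to $(CCF_{1})$ for all $\varphi,\psi \in \mathcal{D}$.  I would first repeat, verbatim, the derivation $i) \Rightarrow ii)$ from the proof of Proposition~\ref{5cp-deg}, but with the first variable restricted to $\mathcal{D}_{0}$; this gives the auxiliary relation
\[
\mathbf{G}(\varphi *_{0} \psi)C = \mathbf{G}(\varphi)\mathbf{G}(\psi) + \mathbf{G}^{-1}(\varphi)(\mathbf{G}' - \delta \otimes C)(\psi), \qquad \varphi \in \mathcal{D}_{0},\ \psi \in \mathcal{D}.
\]
For arbitrary $\varphi,\psi \in \mathcal{D}$ and $z \in E$ I would then set
\begin{align*}
x &:= \bigl[\mathbf{G}^{-1}(\varphi *_{0} \psi)C - \mathbf{G}^{-1}(\varphi)\mathbf{G}(\psi) - \mathbf{G}(\varphi)\mathbf{G}^{-1}(\psi)\bigr]z, \\
y &:= \bigl[\mathbf{G}(\varphi *_{0} \psi)C - \mathbf{G}(\varphi)\mathbf{G}(\psi) - (\mathbf{G}' - \delta \otimes C)(\varphi)\mathbf{G}^{-1}(\psi)\bigr]z.
\end{align*}
Vanishing of $x$ is exactly $(CCF_{1})$ at $(\varphi,\psi)$ applied to $z$, so by $(CCF_{2})$ it suffices to check $\mathbf{G}(\chi)x + \mathbf{G}^{-1}(\chi)y = 0$ for every $\chi \in \mathcal{D}_{0}$.

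This last identity is verified by expanding $\mathbf{G}(\chi)x$ via the two substitutions $\mathbf{G}(\chi)\mathbf{G}^{-1}(f) = \mathbf{G}^{-1}(\chi *_{0} f)C - \mathbf{G}^{-1}(\chi)\mathbf{G}(f)$ and $\mathbf{G}(\chi)\mathbf{G}(f) = \mathbf{G}(\chi *_{0} f)C - \mathbf{G}^{-1}(\chi)(\mathbf{G}' - \delta \otimes C)(f)$ (valid for $\chi \in \mathcal{D}_{0}$, $f \in \mathcal{D}$ by (\ref{kisin-cdf}) and the auxiliary relation above).  The three terms without an outer $\mathbf{G}^{-1}(\chi)$ factor — namely $\mathbf{G}^{-1}(\chi *_{0} \varphi *_{0} \psi)C^{2}z$, $\mathbf{G}^{-1}(\chi *_{0} \varphi)C\mathbf{G}(\psi)z$, $\mathbf{G}(\chi *_{0} \varphi)C\mathbf{G}^{-1}(\psi)z$ — cancel after one final application of (\ref{kisin-cdf}) to the pair $(\chi *_{0} \varphi,\psi) \in \mathcal{D}_{0} \times \mathcal{D}$, using $\mathbf{G}(\cdot)C = C\mathbf{G}(\cdot)$; the remaining terms carry the explicit factor $\mathbf{G}^{-1}(\chi)$ and collapse to $-\mathbf{G}^{-1}(\chi)y$ by the very choice of $y$.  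Then $(CCF_{2})$ yields $x = 0$ (and $y = 0$ as a bonus), hence $(CCF_{1})$ at $(\varphi,\psi)$, and as $z \in E$ was arbitrary, $\mathbf{G}$ is a (C-DCF).  The main obstacle I anticipate is the alignment of $y$: the expansion naturally produces $(\mathbf{G}' - \delta \otimes C)(\varphi)\mathbf{G}^{-1}(\psi)$ rather than the $\mathbf{G}^{-1}(\varphi)(\mathbf{G}' - \delta \otimes C)(\psi)$ appearing in the auxiliary relation, so $y$ must be chosen to match the former — a small but crucial adjustment that dictates the form of the whole argument.  The ultradistributional version is obtained by replacing $\mathcal{D}_{0},\mathcal{D}$ by $\mathcal{D}_{0}^{\ast},\mathcal{D}^{\ast}$ throughout.
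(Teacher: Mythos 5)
Your proof is correct, but it takes a genuinely different route from the paper's, which runs everything through the matrix reduction of Proposition \ref{5cp-deg}. For (i) the paper passes to ${\mathcal G}$, uses the corresponding identity for pre-(${\mathcal C}$-DS)'s together with ${\mathcal G}(\psi_{+})={\mathcal G}(\psi)$, and reads off the top-right entry of the resulting $2\times 2$ identity; your observation that $\varphi\ast\psi_{+}=\varphi\ast_{0}\psi$ whenever $\varphi\in{\mathcal D}_{0}$ makes (i) a one-line specialization of $(CCF_{1})$ and is the cleaner argument. For (ii) the paper again works at the level of ${\mathcal G}$: it verifies the entrywise identities amounting to ${\mathcal G}(\varphi\ast\psi_{+}){\mathcal C}={\mathcal G}(\varphi){\mathcal G}(\psi)$ for $\varphi\in{\mathcal D}_{0}$, $\psi\in{\mathcal D}$, then concludes (implicitly invoking the semigroup analogue of the present proposition, cf.\ \cite{polugrupe-neijektivne-cds} and \cite[Proposition 3.2.4(ii)]{knjigaho}) that ${\mathcal G}$ is a (${\mathcal C}$-DS), and finally applies Proposition \ref{5cp-deg}. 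Your argument instead re-proves that implication directly in the cosine setting: the two-component defect $(x,y)$ is exactly the right object to feed into $(CCF_{2})$, and the computation you outline does close up. The only step you leave implicit is the associativity $\chi\ast_{0}(\varphi\ast_{0}\psi)=(\chi\ast_{0}\varphi)\ast_{0}\psi$ needed to cancel the first group of terms; this holds for $\chi\in{\mathcal D}_{0}$ because $(\varphi\ast_{0}\psi)_{+}=\varphi_{+}\ast\psi_{+}$, so that both sides equal $\chi\ast\varphi_{+}\ast\psi_{+}$, and it deserves a sentence. What your approach buys is self-containedness (no appeal to the first-order result) and it makes visible why the non-degeneracy condition must couple ${\mathbf G}$ and ${\mathbf G}^{-1}$; what the paper's approach buys is brevity and uniformity with the rest of Section \ref{maxx-duo}, where every statement is deduced from its semigroup counterpart on $E\oplus E$.
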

\begin{proof}

(i) Let ${\mathbf G}$ be a pre-(C-DCF). Then ${\mathcal G}$ is a (C-DS). Then ${\mathcal G}$ is a (C-DS) in $E\oplus E$ and ${\mathcal G}({\psi}_+)={\mathcal G}(\psi)$, for $\psi\in{\mathcal D}$. Hence,
$$\left(\begin{array}{cc}
{\mathbf G}(\varphi\ast{\psi}_+) & {\mathbf G}^{-1}(\varphi\ast{\psi}_+) \\
({\mathbf G}^{\prime }-\delta \otimes C)(\varphi\ast{\psi}_+) & {\mathbf G}(\varphi\ast{\psi}_+)
\end{array}\right) \left(\begin{array}{cc}
x \\
y
\end{array}\right)=$$
$$=\left(\begin{array}{cc}
{\mathbf G}(\varphi) & {\mathbf G}^{-1}(\varphi) \\
({\mathbf G}^{\prime }-\delta \otimes C)(\varphi) & {\mathbf G}(\varphi)
\end{array}\right)\left(\begin{array}{cc}
{\mathbf G}({\psi}) & {\mathbf G}^{-1}({\psi}) \\
({\mathbf G}^{\prime }-\delta \otimes C)({\psi}) & {\mathbf G}({\psi})
\end{array}\right)\left(\begin{array}{cc}
x \\
y
\end{array}\right)$$ for every $\varphi\in{\mathcal D}_0$, $\psi\in{\mathcal D}$, $x,y\in E$. If we choose $x=0$, then we obtain ${\mathbf G}^{-1}({\varphi}{\ast}{\psi}_0)C={\mathbf G}^{-1}(\varphi){\mathbf G}(\psi)+{\mathbf G}(\varphi){\mathbf G}^{-1}(\psi)$, $\varphi\in{\mathcal D}_0$, $\psi\in{\mathcal D}$.\\
(ii) Let now (\ref{kisin-cdf}) and $(CCF_2)$ are fulfilled. Then ${\mathcal G}$ is satisfying non-degeneracy condition (C.S.2) (see \cite{polugrupe-neijektivne-cds}). By (\ref{kisin-cdf}), we have
$${\mathbf G}^{-1}(\varphi{\ast}\psi)C={\mathbf G}^{-1}(\varphi){\mathbf G}(\psi)+{\mathbf G}(\varphi){\mathbf G}^{-1}(\psi),\quad \varphi,\psi\in{\mathcal D}_0$$ and
consequently
$${\mathbf G}(\varphi{\ast}\psi)C={\mathbf G}(\varphi){\mathbf G}(\psi)-{\mathbf G}^{-1}(\varphi){\mathbf G}^{-1}(\varphi)({\mathbf G}'-\delta\otimes C)(\psi)$$
$$({\mathbf G}'-\delta\otimes C)(\varphi\ast\psi)=({\mathbf G}'-\delta\otimes C)(\varphi){\mathbf G}(\psi)+{\mathbf G}(\varphi)({\mathbf G}'-\delta\otimes C)(\psi),\quad \varphi,\psi\in{\mathcal D}_0.$$
We get the ${\mathcal G}$ is a pre-(C-DS). Now for $\varphi\in{\mathcal D}_0$ and $\psi\in{\mathcal D}$ we obtain
$${\mathbf G}({\varphi}{\ast}{\psi}_+)C=-{\mathbf G}^{-1}((\varphi{\ast}_0{\psi}_+)')C=-{\mathbf G}^{-1}({\varphi}'{\ast}_0{\psi}_+{\varphi}(0){\psi}_+)C=$$
$$=-({\mathbf G}^{-1}({\varphi}'){\mathbf G}(\psi)+{\mathbf G}({\varphi}'){\mathbf G}^{-1}(\psi))={\mathbf G}(\varphi){\mathbf G}(\psi)+{\mathbf G}'(\varphi){\mathbf G}^{-1}(\psi).$$
Since $(\varphi{\ast_0}{\psi}_+)'=({\varphi}{\ast}_0{\psi}'_+)+\psi(0)\varphi$, $\varphi\in{\mathcal D}_0$, $\psi\in{\mathcal D}$, we get
$${\mathbf G}({\varphi}{\ast}{\psi}_+)C=-{\mathbf G}^{-1}(({\varphi}{\ast}_0{\psi}_+)')C=-{\mathbf G}^{-1}({\varphi}{\ast}_0(\psi)'_+ +\psi(0)\varphi)C=$$
$$=-({\mathbf G}^{-1}(\varphi){\mathbf G}({\psi}')+{\mathbf G}(\varphi){\mathbf G}^{-1}({\psi}'))-\psi(0){\mathbf G}^{-1}(\varphi)C=$$
$$={\mathbf G}(\varphi){\mathbf G}(\psi)+{\mathbf G}^{-1}(\varphi)({\mathbf G}'-\delta\otimes C)(\psi)$$
and
$$({\mathbf G}'-\delta\otimes C)(\varphi\ast{\psi}_+)={\mathbf G}'(\varphi\ast{\psi}_+)C=-{\mathbf G}(({\varphi}{\ast}_0{\psi}_+)')C=$$

$$=-{\mathbf G}({\varphi}'{\ast}_0{\psi}_+)C=-({\mathbf G}({\varphi}'){\mathbf G}(\psi)+{\mathbf G}^{-1}({\varphi}')({\mathbf G}'-\delta\otimes C)(\psi))=$$
$$=({\mathbf G}'-\delta\otimes C)(\varphi){\mathbf G}(\psi)+{\mathbf G}(\varphi)({\mathbf G}'-\delta\otimes C)(\psi).$$
Hence, we obtained that ${\mathcal G}$ is a (C-DS) and by Proposition \ref{5cp-deg} we get that ${\mathbf G}$ is a (C-DCF). The proof is analogous for the ultradistribution case.
\end{proof}

If
${\mathbf G}$ is a pre-(C-DCF)
(pre-(C-UDCF)  of $\ast$-class), then we can almost directly prove that
the dual $\mathbf{G}(\cdot)^*$ is a pre-($C^*$-DCF) (pre-($C^*$-UDCF) of $\ast$-class) on $E^*$ satisfying
$\mathcal{N}(\mathbf {G}^*)=\overline{\mathcal{R}(\mathbf{G})}^{\circ},$ and that the reflexivity of
$E$ additionally implies that $\mathcal{N}(\mathbf{G})=\overline{\mathcal{R}(\mathbf{G}^*)}^{\circ}$.

\begin{prop}\label{kisinski-second}
Suppose that ${\mathbf G}\in {\mathcal D}^{\prime}_{0}(L(E))$ (${\mathbf G}\in {\mathcal D}^{\prime \ast}_{0}(L(E))$) and ${\mathbf G}(\cdot)C=C{\mathbf G}(\cdot).$
Then ${\mathbf G}$ is a pre-(C-DCF) (pre-(C-UDCF) of $\ast$-class) iff for every $\varphi,\ \psi \in {\mathcal D}$ ($\varphi,\ \psi \in {\mathcal D}^{\ast}$), we have:
\begin{align*}
{\mathbf G}^{-1}(\varphi){\mathbf G}^{\prime}(\psi)-{\mathbf G}^{\prime}(\varphi){\mathbf G}^{-1}(\psi)=\psi(0){\mathbf G}^{-1}(\varphi)C-\varphi(0){\mathbf G}^{-1}
(\psi)C.
\end{align*}
\end{prop}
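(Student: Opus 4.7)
The plan is to carry out both implications by direct manipulation, using two structural ingredients: the Leibniz rule for the truncated convolution,
$(\varphi\ast_0\psi)' = \varphi'\ast_0\psi + \varphi(0)\psi = \varphi\ast_0\psi' + \psi(0)\varphi$,
and the distributional identities $\mathbf{G}^{-1}(\chi') = -\mathbf{G}(\chi)$ and $\mathbf{G}(\chi') = -\mathbf{G}'(\chi)$, both following from $(\mathbf{G}^{-1})' = \mathbf{G}$.

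For the forward implication, I start from $(CCF_1)$ and consider its two instances obtained by the substitutions $\varphi\mapsto\varphi'$ and $\psi\mapsto\psi'$ respectively; subtracting them and invoking the Leibniz rule in the form $\varphi'\ast_0\psi - \varphi\ast_0\psi' = \psi(0)\varphi - \varphi(0)\psi$, the left-hand side collapses via linearity of $\mathbf{G}^{-1}(\cdot)C$ to $\psi(0)\mathbf{G}^{-1}(\varphi)C - \varphi(0)\mathbf{G}^{-1}(\psi)C$. On the right-hand side, the derivative identities expand $\mathbf{G}^{-1}(\varphi')\mathbf{G}(\psi) = -\mathbf{G}(\varphi)\mathbf{G}(\psi)$, $\mathbf{G}(\varphi')\mathbf{G}^{-1}(\psi) = -\mathbf{G}'(\varphi)\mathbf{G}^{-1}(\psi)$, and the symmetric expressions for the $\psi$-substitution. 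The two $\mathbf{G}(\varphi)\mathbf{G}(\psi)$ terms cancel, and what remains is precisely $\mathbf{G}^{-1}(\varphi)\mathbf{G}'(\psi) - \mathbf{G}'(\varphi)\mathbf{G}^{-1}(\psi)$, the asserted identity.

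For the converse, I introduce the defect $F(\varphi,\psi) := \mathbf{G}^{-1}(\varphi\ast_0\psi)C - \mathbf{G}^{-1}(\varphi)\mathbf{G}(\psi) - \mathbf{G}(\varphi)\mathbf{G}^{-1}(\psi)$. Running the forward computation in reverse shows that the identity in the proposition is equivalent to $F(\varphi',\psi) = F(\varphi,\psi')$ for all $\varphi,\psi\in\mathcal{D}$. To conclude $F\equiv 0$, I would combine this relation with the boundary vanishing $F(\varphi,\psi) = 0$ whenever $\text{supp}(\varphi)\subseteq(-\infty,-T)$ for some sufficiently large $T$: indeed $\mathbf{G}(\varphi) = \mathbf{G}^{-1}(\varphi) = 0$ by the support condition on $\mathbf{G}$, and a direct check with the definition of $\ast_0$ shows that $\varphi\ast_0\psi$ vanishes on a neighborhood of $[0,\infty)$, forcing the remaining term to vanish. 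Viewing $F$ as a bilinear $L(E)$-valued form on $\mathcal{D}\times\mathcal{D}$, the relation $F(\varphi',\psi)=F(\varphi,\psi')$ says that its kernel satisfies $(\partial_x-\partial_y)K=0$, hence depends only on $x+y$, so $F(\varphi,\psi)$ factors through $\varphi\ast\psi$; the boundary vanishing together with density of such convolutions in $\mathcal{D}$ then yields $F\equiv 0$.

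The principal obstacle is the converse: making the factorization of $F$ through the convolution rigorous in the operator-valued setting (while respecting that the original convolution is $\ast_0$ rather than $\ast$, which requires comparing the two on the relevant test classes) and verifying that the boundary vanishing really propagates through this factorization. The ultradistribution case is handled identically, since the Leibniz rule, the derivative identities, and the construction of the antiderivative operator $I$ all transfer verbatim from $\mathcal{D}$ to $\mathcal{D}^{\ast}$.
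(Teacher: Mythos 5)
Your forward implication is complete and correct: substituting $\varphi\mapsto\varphi'$ and $\psi\mapsto\psi'$ into $(CCF_{1})$, subtracting, and using $(\varphi\ast_{0}\psi)'=\varphi'\ast_{0}\psi+\varphi(0)\psi=\varphi\ast_{0}\psi'+\psi(0)\varphi$ together with $\mathbf{G}^{-1}(\chi')=-\mathbf{G}(\chi)$ and $\mathbf{G}(\chi')=-\mathbf{G}'(\chi)$ does yield exactly the displayed identity, and your observation that the identity is equivalent to $F(\varphi',\psi)=F(\varphi,\psi')$ for the defect $F$ is also correct. This is a genuinely different route from the paper's: the paper does not compute at all, but passes via Proposition \ref{5cp-deg} to the matrix $\mathcal{G}$ and quotes the already-established Kisy\'nski-type characterization of pre-$(\mathcal{C}\mbox{-DS})$'s, namely $\mathcal{G}(\varphi')\mathcal{G}(\psi)-\mathcal{G}(\varphi)\mathcal{G}(\psi')=\psi(0)\mathcal{G}(\varphi)\mathcal{C}-\varphi(0)\mathcal{G}(\psi)\mathcal{C}$, reading the proposition off from the relevant matrix entries. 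Your argument is self-contained and in effect re-proves the hard half of that semigroup theorem in the cosine setting; the paper's reduction buys brevity at the cost of leaning on the companion papers.

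The gap is where you put it, and as written the converse is not yet a proof: everything after ``hence depends only on $x+y$'' is heuristic, and a kernel theorem for separately continuous $L(E)$-valued bilinear forms on $\mathcal{D}\times\mathcal{D}$, with $E$ an arbitrary barreled SCLCS, is exactly the kind of statement one should not wave at. The gap is fillable, and more cheaply than by factorization: for fixed $\varphi,\psi$ and $x\in E$, the map $t\mapsto F(\varphi(\cdot-t),\psi(\cdot+t))x$ is differentiable (strong differentiability of $t\mapsto\mathbf{G}(\varphi(\cdot-t))$ plus Banach--Steinhaus in the barreled space $E$ justify the product rule), and its derivative equals $-F\bigl((\varphi(\cdot-t))',\psi(\cdot+t)\bigr)x+F\bigl(\varphi(\cdot-t),(\psi(\cdot+t))'\bigr)x=0$ by your relation; so the function is constant in $t$. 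For $t$ sufficiently negative the translate of $\varphi$ is supported in $(-\infty,-1)$, so $\mathbf{G}$ and $\mathbf{G}^{-1}$ kill it, and a direct check shows $\varphi(\cdot-t)\ast_{0}\psi(\cdot+t)$ vanishes identically (the integrand requires $s\in\mathrm{supp}(\psi)-t$ while $s$ ranges over a bounded interval); hence the constant is $0$ and $F\equiv 0$. This replaces both the kernel factorization and the density-of-convolutions step, avoids the $\ast_{0}$ versus $\ast$ comparison you worry about, and transfers verbatim to $\mathcal{D}^{\ast}$. With that substitution your proof is complete; without it, the converse direction remains a sketch.
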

(see (\ref{broj}) for ${\mathbf G}^{-1}(\varphi)$).
\begin{proof}
Having on mind that (see \cite[Proposition 4.5]{polugrupe-neijektivne-cds} and \cite[Proposition 2.5]{polugrupe-neijektivne-cds-prim})
$${\mathcal G}({\varphi}'){\mathcal G}(\psi)-{\mathcal G}(\varphi){\mathcal G}({\psi}')=\psi(0){\mathcal G}({\varphi})C-{\varphi}(0){\mathcal G}(\psi)C,\quad\quad \varphi,\psi\in{\mathcal D}, (\varphi,\psi\in{\mathcal D}^{\ast})$$ and by Proposition \ref{5cp-deg} we obtain the statement of the proposition.
\end{proof}

Assume ${\mathbf G}$ is a pre-$(C-DCF)$ (pre-$(C-UDCF)$ of $\ast$-class). Then we define the (integral) generator
${\mathbf A}$ of ${\mathbf G}$ by
\[
{\mathbf A}:=\Bigl\{(x,y)\in E \oplus E : {\mathbf
G}^{-1}\bigl(\varphi^{\prime \prime}\bigr)x={\mathbf G}^{-1}(\varphi )y \mbox{
for all }\varphi \in {\mathcal D}_{0} \Bigr\}.
\]
Then ${\mathbf A}$ is a closed multi-valued linear operator (MLO) and it can be easily seen that ${\mathbf A} \subseteq C^{-1}{\mathbf A}C,$
with the equality in the case that the operator $C$ is injective. If $(CCF_{2})$ holds, then it is clear that ${\mathbf A}=A$ is a closed
single-valued linear operator.

Furthermore, we can extend the assertion of \cite[Lemma 3.4.7]{knjigah} in our context:

\begin{lem}\label{kisinskija-second}
Let ${\mathbf A}$ be the generator of a pre-(C-DCF)  (pre-(C-UDCF) of $\ast$-class) ${\mathbf G}.$ Then ${\mathcal A}\subseteq {\mathcal B},$ where ${\mathcal A}\equiv \left(
\begin{array}{cc}
0 & I \\
{\mathbf A} & 0
\end{array}
\right) $ and ${\mathcal B}$ is the generator of ${\mathcal G}.$
Furthermore, $(x,y)\in {\mathbf A} \Leftrightarrow
\Bigl(\binom{x}{0},\binom{0}{y}\Bigr)\in {\mathcal B}$ and ${\mathcal B}$ is single-valued iff ${\mathbf G}$ is a (C-DCF)  ((C-UDCF) of $\ast$-class).
\end{lem}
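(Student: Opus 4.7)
The plan is to compare ${\mathbf A}$ and ${\mathcal B}$ by unpacking each definition and exploiting the matrix structure of ${\mathcal G}$ and ${\mathcal C}$. From \cite{polugrupe-neijektivne-cds}, the generator of a pre-$({\mathcal C}$-DS$)$ is characterized by $(X,Y)\in{\mathcal B}$ iff $({\mathcal G}'-\delta\otimes{\mathcal C})(\varphi)X={\mathcal G}(\varphi)Y$ for all $\varphi\in{\mathcal D}_0$ (resp. $\varphi\in{\mathcal D}_0^{\ast}$). Because every $\varphi\in{\mathcal D}_0$ is smooth and supported in $[0,\infty)$, one has $\varphi(0)=\varphi'(0)=0$, so the defining relation simplifies to ${\mathcal G}(\varphi')X+{\mathcal G}(\varphi)Y=0$. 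Throughout I will use the identity $-{\mathbf G}^{-1}(\psi')={\mathbf G}(\psi)$ (hence ${\mathbf G}^{-1}(\psi'')=-{\mathbf G}(\psi')$) recorded after (\ref{broj}), together with the invariance of ${\mathcal D}_0$ under differentiation.

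First I would establish the equivalence $(x,y)\in{\mathbf A}\Leftrightarrow \bigl(\binom{x}{0},\binom{0}{y}\bigr)\in{\mathcal B}$. Substituting $X=\binom{x}{0}$ and $Y=\binom{0}{y}$ into the matrix form of ${\mathcal G}$ and using the vanishing of $\varphi(0)$ and $\varphi'(0)$, the first coordinate of ${\mathcal G}(\varphi')X+{\mathcal G}(\varphi)Y$ becomes ${\mathbf G}(\varphi')x+{\mathbf G}^{-1}(\varphi)y$, and the second becomes $-{\mathbf G}(\varphi'')x+{\mathbf G}(\varphi)y$. Rewriting ${\mathbf G}(\varphi')=-{\mathbf G}^{-1}(\varphi'')$, the vanishing of the first coordinate for all $\varphi\in{\mathcal D}_0$ is exactly the defining condition ${\mathbf G}^{-1}(\varphi'')x={\mathbf G}^{-1}(\varphi)y$ of ${\mathbf A}$. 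Vanishing of the second coordinate is then automatic: replacing $\varphi$ by $\varphi'\in{\mathcal D}_0$ in the first-coordinate condition yields ${\mathbf G}^{-1}(\varphi''')x={\mathbf G}^{-1}(\varphi')y$, that is, $-{\mathbf G}(\varphi'')x=-{\mathbf G}(\varphi)y$. This proves the biconditional.

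For the inclusion ${\mathcal A}\subseteq{\mathcal B}$, a generic element of ${\mathcal A}$ has the form $\bigl(\binom{x_1}{x_2},\binom{x_2}{z}\bigr)$ with $(x_1,z)\in{\mathbf A}$, which I would split as
$$
\Bigl(\tbinom{x_1}{0},\tbinom{0}{z}\Bigr)+\Bigl(\tbinom{0}{x_2},\tbinom{x_2}{0}\Bigr).
$$
The first summand belongs to ${\mathcal B}$ by the equivalence just established. A short computation using the matrix form of ${\mathcal G}$ and $-{\mathbf G}^{-1}(\varphi')={\mathbf G}(\varphi)$ shows that for $\varphi\in{\mathcal D}_0$ both coordinates of ${\mathcal G}(\varphi')\binom{0}{x_2}+{\mathcal G}(\varphi)\binom{x_2}{0}$ cancel to zero, so the second summand also lies in ${\mathcal B}$. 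MLO-linearity of ${\mathcal B}$ then yields ${\mathcal A}\subseteq{\mathcal B}$.

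Finally, ${\mathcal B}$ is single-valued iff ${\mathcal B}(0)=\{0\}$, i.e., iff $\bigl(0,\binom{z_1}{z_2}\bigr)\in{\mathcal B}$ forces $z_1=z_2=0$. Unpacking the definition, this means ${\mathcal G}(\varphi)\binom{z_1}{z_2}=0$ for all $\varphi\in{\mathcal D}_0$; coordinate-wise, the $\varphi\mapsto\varphi'$ trick again makes the second condition redundant, and the statement collapses to the requirement that ${\mathbf G}(\varphi)z_1+{\mathbf G}^{-1}(\varphi)z_2=0$ for every $\varphi\in{\mathcal D}_0$ imply $z_1=z_2=0$. This is precisely $(CCF_{2})$, so ${\mathcal B}$ is single-valued iff ${\mathbf G}$ is a $(C$-DCF$)$ (resp.\ a $(C$-UDCF$)$ of $\ast$-class). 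The ultradistribution case is identical, as every identity used survives the replacement ${\mathcal D}_0\rightsquigarrow{\mathcal D}_0^{\ast}$. I expect the main obstacle to be the sign bookkeeping between ${\mathbf G}$, ${\mathbf G}'$, and ${\mathbf G}^{-1}$, and in particular noticing that the second-coordinate condition is automatic from the first via differentiation inside ${\mathcal D}_0$.
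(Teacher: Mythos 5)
Your proof is correct, and it follows the same basic strategy as the paper (unpacking the matrix form of ${\mathcal G}$ against the defining relations of ${\mathbf A}$ and ${\mathcal B}$), but it is noticeably more complete. The paper's proof only treats pairs of the special form $\bigl(\binom{x}{0},\binom{0}{y}\bigr)$: it asserts the implication $(x,y)\in{\mathbf A}\Rightarrow\bigl(\binom{x}{0},\binom{0}{y}\bigr)\in{\mathcal B}$ by passing through ${\mathcal A}$, which presupposes the inclusion ${\mathcal A}\subseteq{\mathcal B}$ that is itself part of the claim, and it never verifies that a general element $\bigl(\binom{x_1}{x_2},\binom{x_2}{z}\bigr)$ of ${\mathcal A}$ lies in ${\mathcal B}$. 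Your decomposition into $\bigl(\binom{x_1}{0},\binom{0}{z}\bigr)+\bigl(\binom{0}{x_2},\binom{x_2}{0}\bigr)$, together with the direct check that the second summand belongs to ${\mathcal B}$, supplies exactly the missing step (the paper only records this fact afterward, as the unproved observation (\ref{phase-space-komutiranje})). Likewise, for the final equivalence the paper proves only that $(CCF_2)$ implies single-valuedness of ${\mathcal B}$; your reduction of ${\mathcal B}0=\{0\}$ to $(CCF_2)$ via the $\varphi\mapsto\varphi'$ redundancy argument gives both directions cleanly. The key technical points you rely on --- that $\varphi$ and all its derivatives vanish at $0$ for $\varphi\in{\mathcal D}_0$, that ${\mathcal D}_0$ is stable under differentiation, and the identity $-{\mathbf G}^{-1}(\varphi')={\mathbf G}(\varphi)$ --- are all available in the paper, so the argument goes through without further input.
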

\begin{proof}
Let $(x,y)\in{\mathbf A}$. Then $\Biggl(\binom{x}{0},\binom{0}{y}\Biggr)\in {\mathcal A},\quad x\in E$ and consequently,
$\Biggl(\binom{x}{0},\binom{0}{y}\Biggr)\in {\mathcal B}$. Now, let $\Biggl(\binom{x}{0},\binom{0}{y}\Biggr)\in {\mathcal B}$ and fix $\varphi\in{\mathcal D}_0$. Then ${\mathcal G}(-{\varphi}')\binom{x}{0}={\mathcal G}(\varphi)\binom{0}{y}$ and by the definition of ${\mathcal G}$,
$$\left(\begin{array}{cc}
{\mathbf G}(-{\varphi}') & {\mathbf G}^{-1}(-{\varphi}') \\
({\mathbf G}^{\prime }-\delta \otimes C)(-{\varphi}') & {\mathbf G}(-{\varphi}')
\end{array}\right) \left(\begin{array}{cc}
x \\
0
\end{array}\right)=$$
$$=\left(\begin{array}{cc}
{\mathbf G}(\varphi) & {\mathbf G}^{-1}(\varphi) \\
({\mathbf G}^{\prime }-\delta \otimes C)(\varphi) & {\mathbf G}(\varphi)
\end{array}\right)\left(\begin{array}{cc}
0 \\
y
\end{array}\right)$$
Thereby, ${\mathbf G}(-{\delta}')x={\mathbf G}^{-1}(\varphi)y$, i.e. ${\mathbf G}^{-1}({\varphi}'')x={\mathbf G}(\varphi)y$. This implies that $(x,y)\in{\mathbf A}$.\\
Suppose now that ${\mathbf G}$ is a (${\mathbf C}$-DCF) ((${\mathbf C}$-UDCF)  of $\ast$-class) generated by ${\mathbf A}.$ Then Proposition \ref{5cp-deg} yields that
${\mathcal G}$ is a (C-DS) ((C-UDS)  of $\ast$-class). This implies that the integral generator ${\mathcal B}$ of ${\mathcal G}$ is single-valued and that the operator ${\mathcal C}$ is injective.
\end{proof}

Before proceeding further, it is worth observing that
\begin{align}\label{phase-space-komutiranje}
\Biggl(\binom{0}{x},\binom{x}{0}\Biggr)\in {\mathcal B},\quad x\in E.
\end{align}
We can apply Lemma \ref{kisinskija-second} in order to see that the integral generator ${\mathbf A}$ of ${\mathbf G}$ is single-valued and that the operator $C$ is injective.

Even in the case that $C=I,$ it is not clear whether the assumption that the integral generator ${\mathbf A}$ of a  pre-(${\mathbf C}$-DCF) (pre-(${\mathbf C}$-UDCF)  of $\ast$-class) ${\mathbf G}$ is single-valued implies $(CCF_{2})$ for ${\mathbf G}.$

Let ${\mathbf G}$ be a pre-(C-DCF) (pre-(C-UDCF) of $\ast$-class) generated by ${\mathbf A}.$

\begin{lem}{}
\begin{itemize}
\item[(a)] Let $\psi \in {\mathcal D}$ \ ($%
\psi \in {\mathcal D}^{\ast}$) and $x,\ y\in E$. Then
$({\mathbf G}(\psi )x,y)\in {\mathbf A}$ iff
$$
{\mathbf G}(\psi ^{\prime\prime})x+\psi ^{\prime}(0)Cx-y\in \bigcap_{\varphi \in {\mathcal D}%
_{0}}N\Bigl({\mathbf G}^{-1}(\varphi )\Bigr) \ \ \Biggl(  \in \bigcap_{\varphi \in {\mathcal D}^{\ast}%
_{0}}N\Bigl({\mathbf G}^{-1}(\varphi )\Bigr) \Biggr).
$$
\item[(b)] $({\mathbf G}(\psi )x$, ${\mathbf G}(\psi ^{\prime\prime})x+\psi ^{\prime}(0)Cx)\in {\mathbf A}$, $%
\psi \in {\mathcal D}$ \ ($
\psi \in {\mathcal D}^{\ast}$), $x\in E$.
\item[(c)] $({\mathbf G}^{-1}(\psi )x, -{\mathbf G}(\psi^{\prime})x-\psi (0)Cx)\in {\mathbf A}$, $\psi
\in {\mathcal D}$
\ ($
\psi \in {\mathcal D}^{\ast}$), $x\in E$.
\item[(d)] ${\mathbf G}(\varphi*_0\psi)Cx-{\mathbf G}(\varphi){\mathbf G}(\psi)x\in {\mathbf A}{\mathbf G}^{-1}(\varphi){\mathbf G}^{-1}(\psi)x$, $\varphi,\,\psi\in\mathcal{D}$ ($\varphi,\,\psi\in\mathcal{D}^{\ast}$), $x\in E$.
\end{itemize}
\end{lem}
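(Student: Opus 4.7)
I plan to prove part (b) first and deduce the other three parts from it. The tools are axiom $(CCF_{1})$; the identity $\mathbf{G}^{-1}(\xi')=-\mathbf{G}(\xi)$ for $\xi\in\mathcal{D}$, which follows from $I(\xi')=\xi$ for compactly supported $\xi$ together with (\ref{broj}); Proposition \ref{kisinski-second}; the fact $\varphi(0)=\varphi'(0)=0$ for $\varphi\in\mathcal{D}_{0}$; and the elementary convolution identity
\[
\varphi''\ast_{0}\psi=\varphi\ast_{0}\psi''+\psi'(0)\varphi+\psi(0)\varphi',\qquad \varphi\in\mathcal{D}_{0},\;\psi\in\mathcal{D}.
\]

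For (b) I would expand $\mathbf{G}^{-1}(\varphi''\ast_{0}\psi)C$ in two ways via $(CCF_{1})$: once applied to the pair $(\varphi'',\psi)$, yielding $\mathbf{G}^{-1}(\varphi'')\mathbf{G}(\psi)+\mathbf{G}(\varphi'')\mathbf{G}^{-1}(\psi)$; and once, after the convolution identity above, applied to the pair $(\varphi,\psi'')$, yielding (using $\mathbf{G}^{-1}(\psi'')=-\mathbf{G}(\psi')$ and $\mathbf{G}^{-1}(\varphi')=-\mathbf{G}(\varphi)$) the expression $\mathbf{G}^{-1}(\varphi)\mathbf{G}(\psi'')-\mathbf{G}(\varphi)\mathbf{G}(\psi')+\psi'(0)\mathbf{G}^{-1}(\varphi)C-\psi(0)\mathbf{G}(\varphi)C$. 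Equating, applying to $x\in E$, and rearranging, the target identity $\mathbf{G}^{-1}(\varphi'')\mathbf{G}(\psi)x=\mathbf{G}^{-1}(\varphi)[\mathbf{G}(\psi'')x+\psi'(0)Cx]$ reduces to
\[
\mathbf{G}(\varphi'')\mathbf{G}^{-1}(\psi)+\mathbf{G}(\varphi)\mathbf{G}(\psi')+\psi(0)\mathbf{G}(\varphi)C=0,
\]
which I would verify by substituting $\varphi'\in\mathcal{D}$ for $\varphi$ in Proposition \ref{kisinski-second} and simplifying with $\varphi'(0)=0$ and $\mathbf{G}^{-1}(\varphi')=-\mathbf{G}(\varphi)$.

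Given (b), part (a) is immediate, for by definition $(\mathbf{G}(\psi)x,y)\in\mathbf{A}$ means $\mathbf{G}^{-1}(\varphi'')\mathbf{G}(\psi)x=\mathbf{G}^{-1}(\varphi)y$ for every $\varphi\in\mathcal{D}_{0}$, and (b) rewrites this as $\mathbf{G}^{-1}(\varphi)[\mathbf{G}(\psi'')x+\psi'(0)Cx-y]=0$. For (c) I would apply (b) with $I(\psi)\in\mathcal{D}$ replacing $\psi$: because $\eta$ is supported in $[-2,-1]$ while $\mathbf{G}$ is supported in $[0,\infty)$, we have $\mathbf{G}(\eta)=\mathbf{G}(\eta')=0$, hence $\mathbf{G}(I(\psi))=-\mathbf{G}^{-1}(\psi)$ by (\ref{broj}), $I(\psi)'(0)=\psi(0)$, and $\mathbf{G}(I(\psi)'')=\mathbf{G}(\psi')$; substitution produces (c). For (d) I would apply (c) with $\varphi$ in the role of $\psi$ and $\mathbf{G}^{-1}(\psi)x$ in the role of $x$, obtaining
\[
\bigl(\mathbf{G}^{-1}(\varphi)\mathbf{G}^{-1}(\psi)x,\,-\mathbf{G}(\varphi')\mathbf{G}^{-1}(\psi)x-\varphi(0)C\mathbf{G}^{-1}(\psi)x\bigr)\in\mathbf{A}.
\]
Proposition \ref{kisinski-second} combined with $C\mathbf{G}^{-1}(\psi)=\mathbf{G}^{-1}(\psi)C$ then shows that the second coordinate here equals $-\mathbf{G}^{-1}(\varphi)\mathbf{G}(\psi')x-\psi(0)\mathbf{G}^{-1}(\varphi)Cx$, which by identity (ii) from the proof of Proposition \ref{5cp-deg} is precisely $\mathbf{G}(\varphi\ast_{0}\psi)Cx-\mathbf{G}(\varphi)\mathbf{G}(\psi)x$, giving (d).

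The main obstacle is the bookkeeping inside (b): four products involving $\mathbf{G}$ and $\mathbf{G}^{-1}$ must be tracked together with three boundary $C$-terms, and the single nontrivial cancellation is furnished by Proposition \ref{kisinski-second} applied at $\varphi'\in\mathcal{D}$ (the hypothesis $\varphi\in\mathcal{D}_{0}$ enters only through $\varphi(0)=\varphi'(0)=0$). Nothing in the argument depends on the injectivity of $C$ or the single-valuedness of $\mathbf{A}$, so the ultradistributional case runs along identical lines.
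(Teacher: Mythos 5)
Your proposal is correct and runs along essentially the same lines as the paper's proof: both arguments reduce everything to the algebraic consequences of $(CCF_{1})$ (the identities from the proof of Proposition \ref{5cp-deg} and Proposition \ref{kisinski-second}), the relation ${\mathbf G}^{-1}(\xi')=-{\mathbf G}(\xi)$, and the properties of the antiderivative $I$ for parts (c) and (d). The only difference is organizational --- you establish the identity of (b) first and read off (a) from it, whereas the paper proves the equivalence (a) directly and obtains (b) as a corollary --- and your bookkeeping, including the cancellation supplied by Proposition \ref{kisinski-second} at $\varphi'$, checks out.
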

\begin{proof}
(a) Clearly $({\mathbf G}(\psi)x,y)\in{\mathbf A}$ iff ${\mathbf G}'(\varphi){\mathbf G}(\psi)x={\mathbf G}^{-1}(\varphi)y$, $\varphi\in{\mathcal D}_0$. This is equivalent to ${\mathbf G}'(\varphi{\ast}_0\psi)x-{\mathbf G}(\varphi){\mathbf G}'(\psi)x+\psi(0)C{\mathbf G}(\varphi)x={\mathbf G}^{-1}(\varphi)y.$ By the same arguments used in the proof of Proposition \ref{5cp-deg} we obtain that ${\mathbf G}({\varphi}'')x+{\varphi}'(0)Cx-y\in\bigcap_{\varphi \in {\mathcal D}%
_{0}}N\Bigl({\mathbf G}^{-1}(\varphi )\Bigr)$.\\
(b) This is a consequence of (a).\\
(c) Let we recall that ${\mathbf G}^{-1}(\psi)=-{\mathbf G}(I(\psi))$ and that $\frac{d}{dt}I(\psi)(t)=\psi(t)-\alpha(t)\cdot\int_{-\infty}^{+\infty}\psi(u)\, du$, $t\in{\mathbb R}$. Then $\frac{d^2}{dt^2}I(\psi)(t)={\psi}'(t)-{\alpha}'(t)\cdot\int_{-\infty}^{+\infty}\psi(u)\, du$, $t\in{\mathbb R}$. Since $\alpha\in{\mathcal D}_{[-2,-1]}$ and ${\mathbf G}\in{\mathcal D}'_0(L(E))$, we obtain $(I(\psi))'(0)=\psi(0)$ and ${\mathbf G}((I(\psi))'')={\mathbf G}({\psi}'-{\alpha}'\cdot\int_{-\infty}^{+\infty})={\mathbf G}({\psi}')$. By (a), ${\mathbf A}{\mathbf G}^{-1}({\psi})x=-{\mathbf A}{\mathbf G}(I(\psi))x=-[{\mathbf G}((I(\psi))'')x+(I(\psi))'(0)Cx]=-{\mathbf G}({\psi}')x-{\psi}(0)Cx$.\\
(d) Since ${\mathbf A}$ generates ${\mathbf G}$ and ${\mathbf G}(\varphi)=-{\mathbf G}^{-1}({\varphi}')$, $\varphi\in{\mathcal D}$, we have that $${\mathbf G}({\varphi}{\ast}_0{\psi})Cx=-{\varphi}(0)C{\mathbf G}^{-1}({\psi})x-{\mathbf G}^{-1}({\varphi}'{\ast}_0{\psi})x=$$
$$={\mathbf G}(\varphi){\mathbf G}(\psi)x+(-{\varphi}(0)C-{\mathbf G}({\varphi}')){\mathbf G}^{-1}({\psi})x={\mathbf G}(\varphi){\mathbf G}(\psi)x+{\mathbf A}{\mathbf G}(\varphi){\mathbf G}^{-1}(\psi)x,$$ for $x\in E$. The ultradistribution case can be shown by the same arguments.
\end{proof}

If ${\mathbf G}$ is a (${\mathbf C}$-DCF) ((${\mathbf C}$-UDCF)  of $\ast$-class) generated by ${\mathbf A},$ then the operators ${\mathcal B}$ and ${\mathbf A}$ are single-valued; then a similar line of reasoning as in the proof of \cite[Proposition 3.4.8(iii)-(iv)]{knjigah} shows that, for every $\psi \in {\mathcal D}$ \ ($
\psi \in {\mathcal D}^{\ast}$), we have
${\mathbf G}(\psi ) {\mathbf A}\subseteq {\mathbf A}{\mathbf G}(\psi )$
and
${\mathbf G}^{-1}(\psi ) {\mathbf A}\subseteq {\mathbf A}{\mathbf G}^{-1}(\psi ).$ 

\begin{thm}\label{fundamentalna-second}
Suppose that ${\mathbf G}\in {\mathcal D}^{\prime}_{0}(L(E))$ (${\mathbf G}\in {\mathcal D}^{\prime \ast}_{0}(L(E))$), ${\mathbf G}(\cdot)C=C{\mathbf G}(\cdot),$
and ${\mathbf A}$ is a closed MLO on $E$ satisfying that $\mathbf{G}(\cdot){\mathbf A}\subseteq {\mathbf A}{\mathbf G}(\cdot)$ and
\begin{equation}\label{dkenk-second}
\mathbf{G}\bigl(\varphi^{\prime \prime}\bigr)x+\varphi^{\prime}(0)Cx\in {\mathbf A}\mathbf{G}(\varphi)x,\quad x\in E,\ \varphi \in {\mathcal D} \ \ (\varphi \in {\mathcal D}^{\ast}).
\end{equation}
Then the following holds:
\begin{itemize}
\item[(i)] If ${\mathbf A}=A$ is single-valued, then ${\mathbf G}$ is a pre-(C-DCF) (pre-(C-UDCF) of $\ast$-class).
\item[(ii)] If ${\mathbf G}$ satisfies $(CCF_2)$, $C$ is injective and ${\mathbf A}=A$ is single-valued, then ${\mathbf G}$ is a (C-DCF) ((C-UDCF) of $\ast$-class) generated by $C^{-1}AC.$
\item[(iii)] Consider the distribution case. The condition $(CCF_2)$ automatically holds for ${\mathbf G}$.
\end{itemize}
\end{thm}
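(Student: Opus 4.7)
The plan is to reduce the cosine-function problem to the corresponding semigroup problem in the phase space $E\oplus E$, invoking Proposition \ref{5cp-deg} and Lemma \ref{kisinskija-second}. Setting $\mathcal{G}$, $\mathcal{C}$ as before and $\mathcal{A}:=\left(\begin{array}{cc} 0 & I \\ A & 0 \end{array}\right)$, the idea is to verify the hypotheses of the analogous theorem for degenerate $C$-(ultra)distribution semigroups from \cite{polugrupe-neijektivne-cds} for the pair $(\mathcal{G},\mathcal{A})$ and to translate the conclusion back.

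For (i), I would check, for $(\mathcal{G},\mathcal{A})$, the commutation $\mathcal{G}(\varphi)\mathcal{A}\subseteq\mathcal{A}\mathcal{G}(\varphi)$ and the semigroup analog of \eqref{dkenk-second}. Writing out the $2\times 2$ blocks, the first-component matching is essentially tautological, while the second component reduces to the identity
$$
\mathbf{G}'(\varphi)y-\varphi(0)Cy\in A\mathbf{G}^{-1}(\varphi)y,\qquad y\in E.
$$
I would derive this by applying \eqref{dkenk-second} to the test function $I(\varphi)$, using that $(I(\varphi))'(0)=\varphi(0)$ (since $\eta(0)=0$) and that $(I(\varphi))''=\varphi'-\eta'\int\varphi$, together with $\mathbf{G}(\eta')=0$ (because $\eta'$ is supported in $[-2,-1]$ and $\mathbf{G}\in\mathcal{D}'_{0}(L(E))$). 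The commutation $\mathbf{G}^{-1}(\varphi)A\subseteq A\mathbf{G}^{-1}(\varphi)$ then falls out of $\mathbf{G}^{-1}(\varphi)=-\mathbf{G}(I(\varphi))$. With these in hand, the semigroup theorem asserts that $\mathcal{G}$ is a pre-$(\mathcal{C}\text{-DS})$, and Proposition \ref{5cp-deg} translates this into $\mathbf{G}$ being a pre-$(C\text{-DCF})$.

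For (ii), adding $(CCF_{2})$ upgrades $\mathcal{G}$ to a $(\mathcal{C}\text{-DS})$ by Proposition \ref{5cp-deg}; injectivity of $C$ makes $\mathcal{C}$ injective, so by the semigroup analog the integral generator of $\mathcal{G}$ equals $\mathcal{C}^{-1}\mathcal{A}\mathcal{C}$. Combining with the equivalence in Lemma \ref{kisinskija-second}, one has $(x,y)\in\mathbf{A}$ iff $\bigl(\binom{x}{0},\binom{0}{y}\bigr)\in\mathcal{C}^{-1}\mathcal{A}\mathcal{C}$ iff $\bigl(\binom{Cx}{0},\binom{0}{Cy}\bigr)\in\mathcal{A}$ iff $(Cx,Cy)\in A$, i.e.\ $(x,y)\in C^{-1}AC$. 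For (iii), I would invoke the fact, well known in the distribution case and established in \cite{polugrupe-neijektivne-cds} in our locally convex setting, that every pre-$(\mathcal{C}\text{-DS})$ is automatically non-degenerate (the classical argument uses the local finite-order representation of distributions together with Lemma \ref{polinomi} to force the polynomial witnessing any would-be degeneracy to vanish). Via Proposition \ref{5cp-deg} this automatic non-degeneracy transfers to $(CCF_{2})$ for $\mathbf{G}$.

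The main obstacle I anticipate is the MLO bookkeeping in the second-component calculation of (i): one has to be confident that \eqref{dkenk-second} applied to $I(\varphi)$ produces precisely the desired inclusion after the auxiliary $\int\varphi$ terms are killed off by $\mathbf{G}(\eta')=0$, and that the MLO additivity $A(\mathbf{G}(\varphi)x)+A(\mathbf{G}^{-1}(\varphi)y)\subseteq A(\mathbf{G}(\varphi)x+\mathbf{G}^{-1}(\varphi)y)$ delivers the correct inclusion for $\mathcal{A}\mathcal{G}(\varphi)\binom{x}{y}$. The ultradistribution case is handled by the same computations with $\mathcal{D}$ replaced by $\mathcal{D}^{\ast}$ throughout, and the automatic $(CCF_{2})$ claim in (iii) is restricted to the distribution case precisely because the polynomial-representation step of Lemma \ref{polinomi} has no direct analog in the Roumieu/Beurling setting.
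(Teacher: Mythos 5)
Your reduction for (i) and (ii) is exactly the paper's argument: pass to $\mathcal{G}$, $\mathcal{C}$, $\mathcal{A}=\left(\begin{smallmatrix}0&I\\ A&0\end{smallmatrix}\right)$ on $E\oplus E$, verify $\mathcal{G}(\cdot)\mathcal{A}\subseteq\mathcal{A}\mathcal{G}(\cdot)$ together with $\mathcal{G}(-\varphi')(x\ y)^{T}-\varphi(0)\mathcal{C}(x\ y)^{T}\in\mathcal{A}\mathcal{G}(\varphi)(x\ y)^{T}$, invoke the semigroup result (\cite[Remark 4.14]{polugrupe-neijektivne-cds}, \cite[Remark 2.7]{polugrupe-neijektivne-cds-prim}), and translate back through Proposition \ref{5cp-deg} and Lemma \ref{kisinskija-second}; the identification of the generator as $C^{-1}AC$ via $\mathcal{C}^{-1}\mathcal{A}\mathcal{C}=\bigl(\begin{smallmatrix}0&I\\ C^{-1}AC&0\end{smallmatrix}\bigr)$ is also the paper's. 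Your second-component computation (applying \eqref{dkenk-second} to $I(\varphi)$, using $\eta(0)=0$ and $\mathbf{G}(\eta')=0$ to get $\mathbf{G}'(\varphi)y-\varphi(0)Cy\in A\mathbf{G}^{-1}(\varphi)y$) correctly fills in a step the paper dismisses as ``simply proved.''

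For (iii), however, your stated mechanism is too strong: it is \emph{not} true that every pre-$(\mathcal{C}$-DS) in the distribution setting is automatically non-degenerate. The paper's own Remark \ref{xcvbnm-prim-ght} exhibits $\mathbf{G}\equiv 0$ as a \emph{degenerate} pre-distribution cosine function, and its associated $\mathcal{G}\equiv 0$ is a degenerate pre-DS. Whatever argument delivers (iii) must therefore use the full hypotheses of the theorem --- in particular the single-valuedness of $A$ and the inclusion \eqref{dkenk-second} --- and not merely the pre-(DS) axioms; this is the content of the cited semigroup-case remark, which the paper invokes without writing out (``the proof of (iii) can be deduced similarly''). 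So your (i) and (ii) are sound and coincide with the paper's route, but the justification you give for (iii) as written would prove a false statement and needs to be tied back to the generator hypotheses rather than to pre-(DS)'s in general.
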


\begin{proof}
We will only outline the most important details of proof. It can be simply proved that ${\mathcal G}\in {\mathcal D}^{\prime}_{0}(L(E\oplus E))$ (${\mathcal G}\in {\mathcal D}^{\prime \ast}_{0}(L(E\oplus E))$), ${\mathcal G}(\cdot){\mathcal C}={\mathcal C}{\mathcal G}(\cdot),$
and that ${\mathcal A}$ is a closed MLO in $E \oplus E.$ Furthermore, $\mathcal{G}(\cdot){\mathcal A}\subseteq {\mathcal A}{\mathcal G}(\cdot)$ and
\begin{align*}
\mathcal{G}\bigl(-\varphi^{\prime}\bigr)(x\ y)^{T}-\varphi(0){\mathcal C}(x\ y)^{T}\in {\mathcal A}\mathcal{G}(\varphi)(x\ y)^{T},\quad x,\ y\in E,\ \varphi \in {\mathcal D} \ \ (\varphi \in {\mathcal D}^{\ast}).
\end{align*}
By \cite[Remark 4.14]{polugrupe-neijektivne-cds} and \cite[Remark 2.7]{polugrupe-neijektivne-cds-prim} which ones say that
for ${\mathcal G}\in {\mathcal D}^{\prime}_{0}(L(E))$ (${\mathcal G}\in {\mathcal D}^{\prime\ast}_{0}(L(E))$, ${\mathcal G}(\varphi)C=C{\mathcal G}(\varphi),$ $\varphi \in {\mathcal D}$ ($\varphi \in {\mathcal D}^{\ast}$)
and ${\mathcal A}$ is a closed MLO on $E$ satisfying that $\mathcal{G}(\varphi){\mathcal A}\subseteq {\mathcal A}{\mathcal G}(\varphi),$ $\varphi \in {\mathcal D}$ ($\varphi \in {\mathcal D}^{\ast}$) and
$\mathcal{G}\bigl(-\varphi'\bigr)x-\varphi(0)Cx\in {\mathcal A}\mathcal{G}(\varphi)x,\quad x\in E,\ \varphi \in {\mathcal D}\,\, \varphi \in {\mathcal D}^{\ast})$, we have that ${\mathcal G}$ is a pre-(${\mathcal C}$-DS)  in $E \oplus E$ so that (i) follows immediately from Proposition \ref{5cp-deg}. In order to prove (ii), notice that ${\mathcal G}$ satisfies (C.S.2) and again by \cite[Remark 4.14]{polugrupe-neijektivne-cds} and \cite[Remark 2.7]{polugrupe-neijektivne-cds-prim}, we obtain that ${\mathcal G}$ is a pre-(${\mathcal C}$-DS)  in $E \oplus E$ generated by ${\mathcal C}^{-1}{\mathcal A}{\mathcal C}=\bigl(\begin{smallmatrix}0&I\\ C^{-1}AC &0\end{smallmatrix}\bigr).$ Now the part (ii) simply follows from Proposition \ref{5cp-deg}
and Lemma \ref{kisinskija-second}.
The proof of (iii) can be deduced similarly.
\end{proof}

\begin{rem}\label{prvo-zapazanje}
Concerning the assertion (i), its validity is not true in multivalued case (\cite{polugrupe-neijektivne-cds}-\cite{polugrupe-neijektivne-cds-prim}):
Let $C=I,$ let ${\mathbf A}\equiv E\times E,$ and let ${\mathbf G}\in {\mathcal D}^{\prime}_{0}(L(E))$ (${\mathcal G}\in {\mathcal D}^{\prime \ast}_{0}(L(E))$) be arbitrarily chosen. Then ${\mathbf G}$ commutes with ${\mathbf A}$ and (\ref{dkenk-second}) holds but ${\mathbf G}$ need not satisfy $(CCF_1).$
\end{rem}

\begin{rem}\label{lokalno-konveksni-generatori}
Let ${\mathbf G}\in {\mathcal D}_{0}^{\prime }(L(E))$ (${\mathbf G}\in {\mathcal D}^{\prime \ast}_{0}(L(E))$) and ${\mathbf G}(\cdot)C=C{\mathbf G}(\cdot).$ Suppose that ${\mathcal A}=A$ is single-valued and $C$ is injective. If ${\mathbf G}$ is a (C-DCF) in $E$ ((C-UDCF) of $\ast$-class in $E$), then ${\mathcal B}$ is single-valued and we can proceed as in the proof of \cite[Proposition 3.4.8(iii)]{knjigah} so as to conclude that $\mathbf{G}(\cdot){\mathbf A}\subseteq {\mathbf A}{\mathbf G}(\cdot).$ Combining this
fact with  Proposition \ref{5cp-deg}, \cite[Remark 4.14]{polugrupe-neijektivne-cds} and \cite[Remark 2.7]{polugrupe-neijektivne-cds-prim} and the arguments contained in the proof of Theorem \ref{fundamentalna-second}, we get that ${\mathbf G}$ is a (C-DCF) in $E$ ((C-UDCF) of $\ast$-class in $E$)
generated by ${\mathbf A}$ iff ${\mathcal G}$ is a (${\mathcal C}$-DS) in $E\oplus
E$ ((${\mathcal C}$-UDS) of $\ast$-class in $E\oplus
E$) generated by ${\mathcal A}.$ This is an extension of \cite[Theorem 3.2.8(ii)]{knjigah}. In degenerate case,
the integral generator of ${\mathcal G}$ can strictly contain ${\mathcal A}.$
In order to verify this,  let
$E$ be an arbitrary Banach space,
let $P\in L(E),$ and let $P^2=P$.
Define
${\mathbf G}_{P}(\varphi)x:=\int_0^{\infty}\varphi(t)\,dt\,Px$, $x\in E,\varphi\in\mathcal{D}$.
Then ${\mathbf G}_{P}^{-1}(\varphi)x=\int_0^{\infty}t\varphi(t)\,dt\,Px$, $x\in E$, $\varphi\in\mathcal{D}$,
${\mathbf G}_{P}$ is a pre-(DCF) in $E$, and
\[
\{x,y\}\subseteq N(P)\Leftrightarrow {\mathbf G}_{P}(\varphi)x+{\mathbf G}_{P}^{-1}(\varphi)y=0\mbox{ for all }\varphi\in\mathcal{D}_0;
\]
see \cite[Example 3.4.46]{knjigah}. A straightforward computation shows that the integral generator of ${\mathbf G}_{P}$ is the MLO ${\mathbf A}=E \times N(P).$ Furthermore, $([x\ y]^{T}, [z\ u]^{T}) \in {\mathcal A}$ iff $y=z$ and $u\in N(P),$
while $([x\ y]^{T}, [z\ u]^{T}) \in {\mathcal B}$  iff $y-z\in N(P) $ and $u\in N(P)$. Hence, ${\mathcal B}$ strictly contains ${\mathcal A}$.
\end{rem}

\begin{rem}\label{lokalno-konveksni-generatori-jedinstvenost}
Suppose that ${\mathcal A}=A$ is single-valued and $C$ is injective. Since any  (${\mathcal C}$-DS) in $E\oplus
E$ ((${\mathcal C}$-UDS) of $\ast$-class in $E\oplus
E$) is uniquely determined by its generator,
the conclusion established in Remark \ref{lokalno-konveksni-generatori} shows that there exists at most one (C-DCF) in $E$ ((C-UDCF) of $\ast$-class in $E$) generated by
$A.$ Even in the case that $E$ is a Banach space and $C=I,$ this is no longer true in degenerate case. To see this,  let
$E$ be an arbitrary Banach space,
let $P_{1}\in L(E),$ $P_{1}^{2}=P_{1},$ $P_{2}\in L(E),$ $P_{2}^{2}=P_{2},$ $N(P_{1})=N(P_{2})$
and $P_{1}\neq P_{2};$ cf. the previous remark. Then  pre-(DCF)'s ${\mathbf G}_{P_{1}}$ and ${\mathbf G}_{P_{2}}$ are different but have the same integral generator. We can choose, for example, the matricial operators
\[
P_{1}=\Biggl[\begin{matrix}0& 0\\ 0 & 1\end{matrix}\Biggr]\mbox{ and }P_{2}=\Biggl[\begin{matrix}0& 1\\ 0 & 1\end{matrix}\Biggr].
\]
\end{rem}

We continue by stating the following theorem.

\begin{thm}\label{herbs-second}
Let $a>0$, $b>0$ and $\alpha>0.$ Suppose that ${\mathbf A}$ is a closed \emph{MLO} and, for every $\lambda$ which belongs to the set $
E(a,b)
,$ there exists
an operator $H(\lambda)\in L(E)$ so that $H(\lambda){\mathbf A}\subseteq {\mathbf A} H(\lambda),$ $\lambda \in E(a,b),$ $\lambda H(\lambda)x-Cx \in {\mathbf A}[ H(\lambda)x/\lambda],$ $\lambda \in  E(a,b),$ $x\in E,$
$H(\lambda)C=C H(\lambda),$ $\lambda \in  E(a,b),$
$\lambda H(\lambda)x-Cx=H(\lambda)y/\lambda,$ whenever $\lambda \in  E(a,b)$ and $(x,y)\in {\mathbf A},$ and that the mapping $\lambda \mapsto H(\lambda)$ is strongly analytic on $\Omega_{a,b}$ and strongly continuous on $\Gamma_{a,b},$
where $\Gamma_{a,b}$ denotes the upwards oriented boundary of $E(a,b)$
and $\Omega_{a,b}$ the open region which lies to the right of $\Gamma_{a,b}.$ Let the operator family $\{(1+|\lambda|)^{-\alpha}H(\lambda) : \lambda \in E(a,b)\}\subseteq L(E)$ be equicontinuous.
Set
\begin{align*}
\mathbf{G}(\varphi)x:=(-i)\int_{\Gamma_{a,b}}\hat{\varphi}(\lambda)H(\lambda)x\,d\lambda,
\;\;x\in E,\;\varphi\in\mathcal{D}.
\end{align*}
Then $\mathbf{G}$ is a pre-(C-DCF) generated by an extension of  ${\mathbf A}. $
\end{thm}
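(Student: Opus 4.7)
The plan is to reduce the construction to the first-order (distribution semigroup) analog on the phase space $E\oplus E$, using the matrix setup of Proposition \ref{5cp-deg} and transferring generator information via Lemma \ref{kisinskija-second}. But first one checks that $\mathbf{G}\in\mathcal{D}^{\prime}_{0}(L(E))$ (resp.\ $\mathcal{D}^{\prime\ast}_{0}(L(E))$): absolute convergence of $(-i)\int_{\Gamma_{a,b}}\hat\varphi(\lambda)H(\lambda)x\,d\lambda$ in $L(E)$ follows because the Paley--Wiener type rapid decay of $\hat\varphi$ in $|\Im\lambda|$ beats the polynomial factor $(1+|\lambda|)^{\alpha}$ controlling $H$ on $\Gamma_{a,b}$; continuity in $\varphi$ is uniform on bounded sets of $\mathcal{D}$ (resp.\ $\mathcal{D}^{\ast}$). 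Commutation with $C$ is immediate from $H(\lambda)C=CH(\lambda)$. For $\operatorname{supp}(\mathbf{G})\subseteq[0,\infty)$, take $\varphi\in\mathcal{D}_{(-\infty,0)}$; then $\hat\varphi$ has additional exponential decay on the right half-plane, and shifting $\Gamma_{a,b}$ rightward, using strong analyticity of $H$ on $\Omega_{a,b}$, forces $\mathbf{G}(\varphi)=0$.

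Next I introduce the block operators
$$\mathcal{A}:=\left(\begin{array}{cc}0 & I\\ \mathbf{A} & 0\end{array}\right),\quad \mathcal{C}:=\left(\begin{array}{cc}C & 0\\ 0 & C\end{array}\right),\quad \mathcal{H}(\lambda):=\left(\begin{array}{cc}H(\lambda) & H(\lambda)/\lambda \\ \lambda H(\lambda)-C & H(\lambda)\end{array}\right),\qquad \lambda\in E(a,b).$$
A direct block calculation, using all four hypotheses on $H$, verifies $\mathcal{H}(\lambda)\mathcal{C}=\mathcal{C}\mathcal{H}(\lambda)$, $\mathcal{H}(\lambda)\mathcal{A}\subseteq\mathcal{A}\mathcal{H}(\lambda)$, $\lambda\mathcal{H}(\lambda)X-\mathcal{C}X\in\mathcal{A}[\mathcal{H}(\lambda)X/\lambda]$ for every $X\in E\oplus E$, and the two-sided identity $\lambda\mathcal{H}(\lambda)X-\mathcal{C}X=\mathcal{H}(\lambda)Y/\lambda$ for $(X,Y)\in\mathcal{A}$; the family $\{(1+|\lambda|)^{-(\alpha+1)}\mathcal{H}(\lambda):\lambda\in E(a,b)\}\subseteq L(E\oplus E)$ is equicontinuous, strongly analytic on $\Omega_{a,b}$ and strongly continuous on $\Gamma_{a,b}$. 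The semigroup analog of the present theorem (\cite[Remark~4.14]{polugrupe-neijektivne-cds}, \cite[Remark~2.7]{polugrupe-neijektivne-cds-prim}), applied to the triple $(\mathcal{A},\mathcal{C},\mathcal{H})$, then yields that
$$\widetilde{\mathcal{G}}(\varphi):=(-i)\int_{\Gamma_{a,b}}\hat\varphi(\lambda)\mathcal{H}(\lambda)\,d\lambda$$
is a pre-$(\mathcal{C}\text{-DS})$ (resp.\ pre-$(\mathcal{C}\text{-UDS})$ of $\ast$-class) on $E\oplus E$ whose integral generator extends $\mathcal{A}$.

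To finish, I identify $\widetilde{\mathcal{G}}$ with the block matrix $\mathcal{G}$ of Proposition \ref{5cp-deg} built from $\mathbf{G}$: the $(1,1)$ and $(2,2)$ blocks are $\mathbf{G}$ by construction, and using $\widehat{\varphi'}(\lambda)=\lambda\hat\varphi(\lambda)$ together with $\widehat{I(\varphi)}(\lambda)=[\hat\varphi(\lambda)-\hat\eta(\lambda)\int\varphi]/\lambda$ one shows that the $(1,2)$ block equals $\mathbf{G}^{-1}$ and the $(2,1)$ block equals $\mathbf{G}'-\delta\otimes C$, modulo the vanishing of spurious contour integrals such as $\int_{\Gamma_{a,b}}\hat\eta(\lambda)H(\lambda)/\lambda\,d\lambda$ and $\int_{\Gamma_{a,b}}\hat\varphi(\lambda)\,d\lambda$ for $\varphi\in\mathcal{D}_0$ (resp.\ $\mathcal{D}^{\ast}_0$); these vanish after closing $\Gamma_{a,b}$ with a large arc inside $\Omega_{a,b}$, combining the Paley--Wiener decay of $\hat\eta$ and $\hat\varphi$ with the polynomial control on $H$. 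Once the identification holds, Proposition \ref{5cp-deg} gives that $\mathbf{G}$ is a pre-$(C\text{-DCF})$ (resp.\ pre-$(C\text{-UDCF})$ of $\ast$-class), and Lemma \ref{kisinskija-second} lifts the inclusion ``integral generator of $\widetilde{\mathcal{G}}$ extends $\mathcal{A}$'' to ``integral generator of $\mathbf{G}$ extends $\mathbf{A}$''. The principal obstacle is precisely this block identification: closing the auxiliary contours requires a careful balancing of the several competing exponential growth/decay rates of $\hat\eta$, $\hat\varphi$ and $H$ along $\Gamma_{a,b}$ and the arcs at infinity, and this is where the specific shape of the exponential region $E(a,b)$ is used in an essential way.
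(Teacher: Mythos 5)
Your proposal follows essentially the same route as the paper: both pass to the phase space $E\oplus E$ via the matrix family $F(\lambda)=\left(\begin{smallmatrix}H(\lambda)&H(\lambda)/\lambda\\ \lambda H(\lambda)-C&H(\lambda)\end{smallmatrix}\right)$, invoke the first-order (distribution semigroup) generation result from \cite{polugrupe-neijektivne-cds} to get a pre-$({\mathcal C}$-DS$)$ generated by an extension of ${\mathcal A}$, and then descend to ${\mathbf G}$ by identifying the blocks and applying Proposition \ref{5cp-deg} and Lemma \ref{kisinskija-second}. The only difference is that you carry out the block identification of the $(1,2)$ and $(2,1)$ entries by explicit contour arguments, whereas the paper delegates this step to the known correspondence between $(C$-DS$)$'s and $(C$-DCF$)$'s in \cite{knjigaho}.
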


\begin{proof}
Set
\begin{align*}
F(\lambda):=\Biggl[\begin{matrix} H(\lambda)&H(\lambda)/\lambda\\ \lambda H(\lambda)-C &H(\lambda)\end{matrix}\Biggr],\quad \lambda \in E(a,b)
\end{align*}
and
\begin{align*}
\mathcal{G}(\varphi)[x \ y]^{T}:=(-i)\int_{\Gamma_{a,b}}\hat{\varphi}(\lambda)F(\lambda)[x \ y]^{T}\,d\lambda,
\;\;x,\ y\in E,\;\varphi\in\mathcal{D}.
\end{align*}
The prescribed assumptions imply that the function $ F(\cdot)$ has the properties necessary for applying \cite[Theorem 4.15]{polugrupe-neijektivne-cds} which one gives that $\mathcal{G}(\varphi)$ is pre-(C-DS) generated by an extension of A  . Furthermore,
supp$(\mathbf{G})\subseteq [0,\infty),$ ${\mathbf G}$ commutes with $C$ and
by the connection between (C-DS)'s and (C-DCF)'s (see \cite[Theorem 3.2.6]{knjigaho}) we have that that
$$
{\mathcal
G}= \Biggl[\begin{matrix}
{\mathbf G} & {\mathbf G}^{-1} \\
{\mathbf G}^{\prime }-\delta \otimes C & {\mathbf G}
\end{matrix}\Biggr].
$$
Due to Proposition \ref{5cp-deg}
and Lemma \ref{kisinskija-second}, we obtain that $\mathbf{G}$ is a pre-(C-DCF) generated by an extension of  ${\mathbf A},$
as claimed.
\end{proof}

\begin{rem}\label{filipa-1-frim}
\begin{itemize}
\item[(i)]
Suppose that $C$ is injective, ${\mathbf A}=A$ is single-valued, $\rho_{C}(A) \subseteq E^{2}(a,b)\equiv \{\lambda^{2} : \lambda \in  E(a,b)\}$ and $H(\lambda)=\lambda (\lambda^{2} -{\mathcal A})^{-1}C,$ $\lambda \in E^{2}(a,b).$ Then ${\mathcal G}$ is a (C-DCF) generated by $C^{-1}AC.$
Even in the case that $C=I,$ the integral generator ${\mathbf A}$ of ${\mathbf G},$ in multivalued case, can strictly contain $C^{-1}{\mathbf A}C.$
\item[(ii)] Let ${\mathbf A}$ be a closed MLO, let $C$ be injective and commute with ${\mathbf A},$ and let $\rho_{C}({\mathbf A}) \subseteq E^{2}(a,b).$ Then the choice $H(\lambda)=\lambda (\lambda^{2} -{\mathcal A})^{-1}C,$ $\lambda \in E(a,b)$
is always possible (\cite{FKP}).
\item[(iii)] In ultradistributional case, it is necessary to replace the exponential region $E(a,b)$ from the formulation
of Theorem \ref{herbs-second} with a corresponding ultra-logarithmic region. Define the operator $\mathbf{G}(\varphi)$ similarly as above.
In non-degenerate case (${\mathbf A}=A$ single-valued, $C$ injective),  it can be proved that $\mathbf{G}(\varphi)$ is a pre-(C-UDCF)
generated by an extension of  ${\mathbf A};$
unfortunately, we do not know then whether
$\mathbf{G}$ has to satisfy $(CCF_1)$ in degenerate case.
\end{itemize}
\end{rem}


The analysis of degenerate almost $C$-(ultra)distribution cosine functions is without the scope of this paper. For more details, see \cite{pm} and \cite[Subsection 3.4.5]{knjigah} and \cite[pp. 380-384]{knjigaho}.

\section[Relations between degenerate $C$-distribution cosine functions...]{Relations between degenerate $C$-distribution cosine functions and degenerate integrated $C$-cosine functions}\label{maxx-frim-primw}

We start this section by stating the following fundamental result.

\begin{thm}\label{lokal-int-C-secondinjo}
Let $\mathbf{G}$ be a pre-(C-DCF) generated by ${\mathbf A}$, and let $\mathbf{G}$ be of finite order.
Then, for every $\tau>0$, there exist a number $n_{\tau}\in\mathbb{N}$
and a local $n_{\tau}$-times integrated $C$-cosine function $(C_{n_{\tau}}(t))_{t\in [0,\tau)}$  such that
\begin{align}\label{utf-88-second}
{\mathcal G}(\varphi)=(-1)^{n_{\tau}}\int \limits^{\infty}_{0}\varphi^{(n_{\tau})}(t)C_{n_{\tau}}(t)\, dt,\quad \varphi \in {\mathcal D}_{(-\tau ,\tau)}.
\end{align}
Furthermore, $(C_{n_{\tau}}(t))_{t\in [0,\tau)}$ is an $n_{\tau}$-times integrated $C$-cosine existence family with a subgenerator ${\mathbf A}.$
\end{thm}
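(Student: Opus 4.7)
The plan is to reduce the statement to the already-established analogue for degenerate $C$-distribution semigroups by passing to the phase space, and then to read off the integrated cosine function from the resulting integrated semigroup via its block structure.

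First, by Proposition \ref{5cp-deg}, the associated matrix distribution
\[
\mathcal{G}=\left(\begin{matrix} \mathbf{G} & \mathbf{G}^{-1}\\ \mathbf{G}'-\delta\otimes C & \mathbf{G}\end{matrix}\right)
\]
is a pre-($\mathcal{C}$-DS) in $E\oplus E$. Since $\mathbf{G}$ is of finite order, each of the entries $\mathbf{G}$, $\mathbf{G}^{-1}$, $\mathbf{G}'-\delta\otimes C$ is of finite order on every interval $(-\tau,\tau)$, so $\mathcal{G}$ is itself of finite order on $(-\tau,\tau)$. I would then invoke the structural theorem for pre-($\mathcal{C}$-DS) of finite order from \cite{polugrupe-neijektivne-cds}: for every $\tau>0$ there exist $n_\tau\in\mathbb{N}$ and a local $n_\tau$-times integrated $\mathcal{C}$-semigroup $(\mathcal{S}_{n_\tau}(t))_{t\in[0,\tau)}$ on $E\oplus E$ such that
\[
\mathcal{G}(\varphi)=(-1)^{n_\tau}\int_0^\infty \varphi^{(n_\tau)}(t)\mathcal{S}_{n_\tau}(t)\,dt,\qquad \varphi\in\mathcal{D}_{(-\tau,\tau)},
\]
and $(\mathcal{S}_{n_\tau}(t))_{t\in[0,\tau)}$ is an $n_\tau$-times integrated $\mathcal{C}$-semigroup existence family with the subgenerator $\mathcal{A}=\bigl(\begin{smallmatrix}0&I\\ \mathbf{A}&0\end{smallmatrix}\bigr)$ supplied by Lemma \ref{kisinskija-second}.

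Next, I would exploit the block structure. Writing
\[
\mathcal{S}_{n_\tau}(t)=\left(\begin{matrix} S_{11}(t) & S_{12}(t)\\ S_{21}(t) & S_{22}(t)\end{matrix}\right),
\]
the integral representation of $\mathcal{G}(\varphi)$ together with uniqueness of the continuous representing kernel (Lemma \ref{polinomi} applied entrywise, after pairing against $\varphi^{(n_\tau)}$) forces $S_{11}(t)=S_{22}(t)=:C_{n_\tau}(t)$ and $S_{12}(t)$, $S_{21}(t)$ to be the corresponding primitive/derivative blocks associated with the classical cosine--semigroup dictionary. Define $C_{n_\tau}(t):=S_{11}(t)$, $t\in[0,\tau)$. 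Then \eqref{utf-88-second} follows by equating the $(1,1)$-entries in the matrix identity.

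To finish, I would check that $(C_{n_\tau}(t))_{t\in[0,\tau)}$ is in fact an $n_\tau$-times integrated $C$-cosine function with $\mathbf{A}$ as subgenerator: strong continuity and the initial value at $t=0$ are inherited from $\mathcal{S}_{n_\tau}$; the d'Alembert-type functional equation for $C_{n_\tau}(t)$ can be derived by applying $\mathcal{S}_{n_\tau}(t)\mathcal{S}_{n_\tau}(s)$-type relations for the semigroup existence family, reading off the $(1,1)$-block and using \eqref{phase-space-komutiranje} to translate between the two phase-space coordinates. The subgenerator/existence-family property for $\mathbf{A}$ then follows from the analogous property of $\mathcal{A}$ for $\mathcal{S}_{n_\tau}$ together with the identification $(x,y)\in\mathbf{A}\Leftrightarrow\bigl(\binom{x}{0},\binom{0}{y}\bigr)\in\mathcal{A}$ of Lemma \ref{kisinskija-second}.

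The main obstacle I expect is the bookkeeping in the third step: extracting the cosine existence-family axioms from the semigroup existence-family axioms through the block matrix requires careful manipulation of the off-diagonal blocks $S_{12}$ and $S_{21}$ (which morally encode antiderivatives/derivatives of $C_{n_\tau}$), and one must be careful that $\mathbf{A}$ is multivalued and $C$ need not be injective, so the argument has to be phrased purely in terms of the graph inclusions $\mathbf{G}(\varphi)\mathbf{A}\subseteq \mathbf{A}\mathbf{G}(\varphi)$ and \eqref{dkenk-second} rather than via $C^{-1}\mathbf{A}C$.
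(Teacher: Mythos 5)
Your overall strategy---pass to the phase space via Proposition \ref{5cp-deg}, use the semigroup theory on $E\oplus E$, and come back down through Lemma \ref{kisinskija-second} and \eqref{phase-space-komutiranje}---is the same as the paper's, and your final step (translating the existence-family property of the matrix semigroup into the statement $(\int_0^t(t-s)C_{n_\tau}(s)x\,ds,\,C_{n_\tau}(t)x-g_{n_\tau+1}(t)Cx)\in{\mathbf A}$ via the graph identification of Lemma \ref{kisinskija-second}) is exactly what the paper does. Where you diverge is in the middle: you apply the finite-order structure theorem to the \emph{matrix} distribution ${\mathcal G}$ to obtain an abstract continuous kernel ${\mathcal S}_{n_\tau}(t)$ on $E\oplus E$, and then try to recognize its blocks. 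The paper goes the other way: it first uses finite order of ${\mathbf G}$ alone to produce the continuous kernel $C_{n_\tau}$ representing ${\mathbf G}$ at order $n_\tau$, and then \emph{explicitly constructs} the matrix
\[
S_{n_{\tau}+1}(t)=\begin{pmatrix}\int_0^tC_{n_{\tau}}(s)\,ds &\int_0^t(t-s)C_{n_{\tau}}(s)\,ds\\ C_{n_{\tau}}(t)-g_{n_{\tau}+1}(t)C &\int_0^tC_{n_{\tau}}(s)\,ds\end{pmatrix},
\]
verifies by a one-line integration by parts that it represents ${\mathcal G}$ at order $n_\tau+1$, and concludes that it is an $(n_\tau+1)$-times integrated ${\mathcal C}$-semigroup existence family with subgenerator ${\mathcal B}$. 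Since this matrix has \emph{precisely} the shape appearing in Lemma \ref{2.1.1.11-mlo-ins}, that lemma immediately yields that $(C_{n_\tau}(t))_{t\in[0,\tau)}$ is an $n_\tau$-times integrated $C$-cosine function, with no block bookkeeping at all. What the paper's ordering buys is exactly the step you flag as your ``main obstacle'': you never have to prove that an abstractly given representing kernel for ${\mathcal G}$ has the integrated-cosine block structure.

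Two concrete points in your version need repair. First, the orders do not match: the $(2,1)$-entry ${\mathbf G}'-\delta\otimes C$ is one order worse than ${\mathbf G}$, so if ${\mathbf G}$ is represented at order $n_\tau$ on $(-\tau,\tau)$ then ${\mathcal G}$ is naturally represented at order $n_\tau+1$, and the $(1,1)$-block of that representation is $\int_0^tC_{n_\tau}(s)\,ds$, not $C_{n_\tau}(t)$; your claim $S_{11}=S_{22}=:C_{n_\tau}$ taken at order $n_\tau$ is therefore off by one integration, and the identification of the blocks via Lemma \ref{polinomi} only determines them up to polynomials, which must be killed using the support condition supp$({\mathcal G})\subseteq[0,\infty)$. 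Second, you propose to derive the d'Alembert identity for $C_{n_\tau}$ from ``${\mathcal S}_{n_\tau}(t){\mathcal S}_{n_\tau}(s)$-type relations for the semigroup \emph{existence family}''; but the existence-family axioms (condition (B)) do not include the composition law---you need the full integrated ${\mathcal C}$-semigroup property of the matrix family, and then Lemma \ref{2.1.1.11-mlo-ins}(i) already encapsulates the passage to the cosine functional equation, so there is no need to re-derive it blockwise. With these two corrections your argument closes, but the paper's explicit construction of $S_{n_\tau+1}$ is the cleaner route.
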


\begin{proof}
Let $
{\mathcal G}$ and $
{\mathcal C}$ be as in the formulation of
Proposition \ref{5cp-deg}, and let ${\mathcal A}$ be the MLO defined in Lemma \ref{kisinskija-second}. Then ${\mathcal G}$ is a pre-(${\mathcal C}$-DS) in $E \oplus E$ generated by a closed MLO ${\mathcal B}$ which contains ${\mathcal A}.$
Since $\mathbf{G}$ is of finite order, we know that, for every $\tau>0$, there exist a number $n_{\tau}\in\mathbb{N}$
and a continuous mapping $C_{n_{\tau}} : [0,\tau)\rightarrow L(E)$  such that (\ref{utf-88-second}) holds true.
Define
\[
S_{n_{\tau}+1}(t):=\begin{pmatrix}
\int_0^tC_{n_{\tau}}(s)\,ds &\int_0^t(t-s)C_{n_{\tau}}(s)\,ds\\
C_{n_{\tau}}(t)-g_{n_{\tau}+1}(t)C\; &\int_0^tC_{n_{\tau}}(s)\,ds\end{pmatrix},
\;\;0\leq t<\tau .
\]
Then $S_{n_{\tau}+1} : [0,\tau)\rightarrow L(E \oplus E)$ is continuous and
\[
{\mathcal G}(\varphi)=(-1)^{n_{\tau}+1}\int \limits^{\infty}_{0}\varphi^{(n_{\tau}+1)}(t)S_{n_{\tau}+1}(t)\, dt,\quad \varphi \in {\mathcal D}_{(-\tau ,\tau)}.
\]
This immediately implies that $(S_{n_{\tau}+1}(t))_{t\in [0,\tau)}$
is an $(n_{\tau}+1)$-times integrated ${\mathcal C}$-integrated semigroup, and that $(S_{n_{\tau}+1}(t))_{t\in [0,\tau)}$
is an $(n_{\tau}+1)$-times integrated ${\mathcal C}$-integrated existence family with a subgenerator ${\mathcal B}.$ Due to Lemma \ref{2.1.1.11-mlo-ins}, we have that $(C_{n_{\tau}}(t))_{t\in [0,\tau)}$
is an $n_{\tau}$-times integrated ${\mathcal C}$-times integrated cosine function so that it remains to be proved that $(C_{n_{\tau}}(t))_{t\in [0,\tau)}$
is an $n_{\tau}$-times integrated ${\mathcal C}$-cosine existence family with subgenerator ${\mathbf A},$ i.e., that
$ (\int_0^t(t-s)C_{n_{\tau}}(s)x\, ds, C_{n_{\tau}}(t)x-g_{n_{\tau}+1}  (t)Cx)\in {\mathbf A}$ for all $t\in [0,\tau)$  and $x\in E.$ This is equivalent to say that
\[
\Biggl(\binom{\int_0^t(t-s)C_{n_{\tau}}(s)x\,ds}{0},\binom{0}{\int_0^t(t-s)C_{n_{\tau}}(s)x\,ds}\Biggr)\in {\mathcal B},\quad x\in E,\ t\in [0,\tau),
\]
which simply follows from the inclusion (\ref{phase-space-komutiranje}) and the fact that $(S_{n_{\tau}+1}(t))_{t\in [0,\tau)}$
is an $(n_{\tau}+1)$-times integrated ${\mathcal C}$-integrated existence family with a subgenerator ${\mathcal B}.$ The proof of the theorem is thereby complete.
\end{proof}

\begin{rem}\label{ultra-trsic}
\begin{itemize}
\item[(i)] If ${\mathbf A}=A$ is single-valued, then ${\mathcal A}$ is single-valued, as well. If so, then $(S_{n_{\tau}+1}(t))_{t\in [0,\tau)}$
is an $(n_{\tau}+1)$-times integrated ${\mathcal C}$-integrated semigroup with a subgenerator ${\mathcal A},$ which implies by Lemma \ref{2.1.1.11-mlo-ins}(ii) that $(C_{n_{\tau}}(t))_{t\in [0,\tau)}$ is an $n_{\tau}$-times integrated $C$-cosine function with a subgenerator $A.$ 
\item[(ii)] If the assumptions of Theorem \ref{lokal-int-C-secondinjo} hold, then ${\mathbf G}(\varphi){\mathbf G}(\psi)={\mathbf G}(\psi){\mathbf G}(\varphi),$  $\varphi,\ \psi \in {\mathcal D}$ (in the Banach space setting, this gives the affirmative answer to the question raised on p. 769 of \cite{kolje}). As a simple consequence, we have that, for every $\psi \in {\mathcal D}$, we have
${\mathbf G}(\psi ) {\mathbf A}\subseteq {\mathbf A}{\mathbf G}(\psi )$
and
${\mathbf G}^{-1}(\psi ) {\mathbf A}\subseteq {\mathbf A}{\mathbf G}^{-1}(\psi ).$
\end{itemize}
\end{rem}

Next theorem is a direct consequence of Lemma \ref{2.1.1.11-mlo-ins} and Proposition \ref{5cp-deg}. This theorem is an extension of \cite[Theorem 3.2.5(iii)]{knjigah} and analogue of \cite[Theorem 4.8]{polugrupe-neijektivne-cds} for degenerate differential equations of second order.

\begin{thm}\label{lokal-int-C-prim-second}
Assume that there exists a sequence $((p_k,\tau_k))_{k\in \mathbb{N}_{0}}$ in $\mathbb{N}_{0} \times (0,\infty)$
such that $\lim_{k\rightarrow \infty}\tau_{k}=\infty ,$ $(p_k)_{k\in \mathbb{N}_{0}}$ and
$(\tau_k)_{k\in \mathbb{N}_{0}}$ are strictly increasing,
as well as that for each
$k\in \mathbb{N}_{0}$
there exists a local $p_{k}$-times integrated
$C$-cosine function $(C_{p_{k}}(t))_{t\in [0,\tau_{k})}$ on $E$
satisfying that
\begin{align}\label{zavezi-dot-second}
C_{p_{m}}(t)x=\bigl(g_{p_{m}-p_{k}}\ast_{0} C_{p_{k}}(\cdot)x \bigr)(t ),\quad x\in E,\ t\in [0,\tau_{k}),
\end{align}
provided $k<m.$
Define
$$
{\mathbf G}(\varphi)x:=(-1)^{p_{k}}\int \limits^{\infty}_{0}\varphi^{(p_{k})}(t)C_{p_{k}}(t)x\, dt,\quad \varphi \in {\mathcal D}_{(-\infty ,\tau_{k})},\ x\in E,\ k\in \mathbb{N}_{0}.
$$
Then $
{\mathbf G}$ is well-defined and $
{\mathbf G}$ is a pre-(C-DCF).
\end{thm}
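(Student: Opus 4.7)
The plan is to reduce this directly to the analogous statement for (${\mathcal C}$-DS) in $E\oplus E$ already available in \cite[Theorem 4.8]{polugrupe-neijektivne-cds}, following the pattern of the proof of Theorem~\ref{lokal-int-C-secondinjo}. So the route is: assemble a $2\times2$-matrix family $S_{p_k+1}(t)$ on $E\oplus E$ out of each local integrated $C$-cosine function $C_{p_k}(t)$, verify the hypotheses of the semigroup version, then pull the conclusion back to $\mathbf{G}$ via Proposition~\ref{5cp-deg}.

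First I would address well-definedness. Fix $k<m$ and $\varphi\in{\mathcal D}_{(-\infty,\tau_k)}$, so that in particular $\varphi\in{\mathcal D}_{(-\infty,\tau_m)}$ as well. Starting from $(-1)^{p_m}\int_0^\infty\varphi^{(p_m)}(t)C_{p_m}(t)x\,dt$, I plug in the consistency relation \eqref{zavezi-dot-second}, write $g_{p_m-p_k}*_0 C_{p_k}(\cdot)x$ as an iterated integral, and integrate by parts $p_m-p_k$ times (boundary terms vanish because of the support of $\varphi$ and because $C_{p_k}(0)=0$-type initial data of the kernel convolution). What remains is exactly $(-1)^{p_k}\int_0^\infty\varphi^{(p_k)}(t)C_{p_k}(t)x\,dt$, so the two definitions agree on the overlap; this is the standard scalar computation and is the only part involving real analysis.

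Next, for each $k$ define the $L(E\oplus E)$-valued continuous family on $[0,\tau_k)$ by
\[
S_{p_k+1}(t):=\begin{pmatrix}\int_0^tC_{p_k}(s)\,ds & \int_0^t(t-s)C_{p_k}(s)\,ds\\[2pt] C_{p_k}(t)-g_{p_k+1}(t)C & \int_0^tC_{p_k}(s)\,ds\end{pmatrix},
\]
exactly as in the proof of Theorem~\ref{lokal-int-C-secondinjo}. By Lemma~\ref{2.1.1.11-mlo-ins}, $S_{p_k+1}(\cdot)$ is a local $(p_k+1)$-times integrated ${\mathcal C}$-semigroup on $E\oplus E$, and the consistency \eqref{zavezi-dot-second} for the $C_{p_k}$'s translates in a direct way (by integrating in $t$ and using $g_{p_m-p_k}*_0 g_1=g_{p_m-p_k+1}$ and the analogous identity for the $(t-s)$-weighted entry) into the corresponding consistency
\[
S_{p_m+1}(t)=\bigl(g_{p_m-p_k}*_0 S_{p_k+1}(\cdot)\bigr)(t),\quad t\in[0,\tau_k),\ k<m,
\]
on the diagonal and off-diagonal entries.

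With the matricial data in place, the hypotheses of \cite[Theorem 4.8]{polugrupe-neijektivne-cds} are satisfied on $E\oplus E$, so the distribution ${\mathcal G}$ defined by
\[
{\mathcal G}(\varphi)[x\ y]^{T}:=(-1)^{p_k+1}\int_0^\infty\varphi^{(p_k+1)}(t)S_{p_k+1}(t)[x\ y]^{T}\,dt,\quad \varphi\in{\mathcal D}_{(-\infty,\tau_k)},
\]
is well-defined and is a pre-(${\mathcal C}$-DS) on $E\oplus E$. A one-line integration by parts in $t$ (which lowers $p_k+1$ to $p_k$ while integrating $S_{p_k+1}$ back down to its constituents) shows that this ${\mathcal G}$ is precisely the matrix $\bigl(\begin{smallmatrix}\mathbf{G} & \mathbf{G}^{-1}\\ \mathbf{G}'-\delta\otimes C & \mathbf{G}\end{smallmatrix}\bigr)$ built from the $\mathbf{G}$ defined in the statement, and that $\mathbf{G}(\cdot)$ commutes with $C$ (the latter because each $C_{p_k}(t)$ commutes with $C$, which follows from the definition of a local integrated $C$-cosine function). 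Proposition~\ref{5cp-deg} then yields that $\mathbf{G}$ is a pre-(C-DCF), as claimed.

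The main obstacle, such as it is, is purely bookkeeping: checking that \eqref{zavezi-dot-second} for $C_{p_k}$ really does lift to the analogous consistency for $S_{p_k+1}$ in all four matrix entries (the subdiagonal entry involves the $g_{p_k+1}(t)C$ correction and is the one that needs some care), and verifying by integration by parts that the distribution produced on $E\oplus E$ by \cite[Theorem 4.8]{polugrupe-neijektivne-cds} coincides with the matrix distribution built from $\mathbf{G}$. Once those two compatibility checks are done, the rest is formal.
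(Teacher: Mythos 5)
Your proposal is correct and follows exactly the route the paper intends: the paper gives no detailed proof but states that the theorem is a direct consequence of Lemma \ref{2.1.1.11-mlo-ins} and Proposition \ref{5cp-deg} and is the second-order analogue of \cite[Theorem 4.8]{polugrupe-neijektivne-cds}, which is precisely the reduction you carry out (lift the consistency \eqref{zavezi-dot-second} to the matrix families $S_{p_k+1}$, apply the semigroup theorem on $E\oplus E$, and pull back via Proposition \ref{5cp-deg}). Your entrywise verification of the lifted consistency and the integration-by-parts identification of ${\mathcal G}$ with $\bigl(\begin{smallmatrix}\mathbf{G} & \mathbf{G}^{-1}\\ \mathbf{G}'-\delta\otimes C & \mathbf{G}\end{smallmatrix}\bigr)$ are exactly the details the paper leaves implicit, and they check out.
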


As in the case of degenerate $C$-distribution semigroups, we have the following remarks and comments on Theorem \ref{lokal-int-C-prim-second}.

\begin{rem}\label{Banach}
\begin{itemize}
\item[(i)] Let ${\mathbf A}_{k}$ be the integral generator of $(C_{p_{k}}(t))_{t\in [0,\tau_{k})}$ ($k\in \mathbb{N}_{0}$).
Then ${\mathbf A}_{k}\subseteq {\mathbf A}_{m}$ for $k>m$ and  $\bigcap_{{k\in {\mathbb N}_{0}}}{\mathbf A}_{k}\subseteq {\mathbf A},$ where ${\mathbf A}$ is the integral generator of ${\mathbf G}.$ Even in the case that $C=I,$ $\bigcap_{{k\in {\mathbb N}_{0}}}{\mathbf A}_{k}$ can be a proper subset of ${\mathbf A}.$
\item[(ii)] Suppose that
${\mathbf A}$ is a subgenerator of $(C_{p_{k}}(t))_{t\in [0,\tau_{k})}$ for all
$k\in \mathbb{N}_{0}.$ Then (\ref{zavezi-dot-second}) automatically holds.
\item[(iii)] If $C=I,$ then it suffices to suppose that there exists an MLO ${\mathbf A}$ subgenerating a local $p$-times integrated
cosine function $(C_{p}(t))_{t\in [0,\tau)}$ for some $p\in {\mathbb N}$ and $\tau>0$  (\cite{catania}).
\end{itemize}
\end{rem}

Proposition \ref{5cp-deg} enables us to simply introduce the notion of an exponential pre-(C-DCF) in $E$ (exponential pre-(C-UDCF)  of $\ast$-class  in $E$):

\begin{defn}\label{qdfn}
Let ${\mathbf G}$ be a pre-(C-DCF) (pre-(C-UDCF) of $\ast$-class).
Then $\mathbf{G}$ is said to be an exponential
pre-(C-DCF) (pre-(C-UDCF) of $\ast$-class) iff there exists $\omega\in\mathbb{R}$ such that $e^{-\omega t}\mathcal{G}
\in\mathcal{S}'(L(E\oplus E))$ ($e^{-\omega t}\mathcal{G}
\in\mathcal{S}'^{\ast}(L(E\oplus E))$).
We use the shorthand pre-(C-EDCF) (pre-(C-EUDCF) of $\ast$-class) to denote an exponential
pre-(C-DCF) (pre-(C-UDCF) of $\ast$-class).
\end{defn}

It can be simply verified that a pre-(C-DCF) (pre-(C-UDCF) of $\ast$-class) ${\mathbf G}$ is exponential iff
there exists $\omega\in\mathbb{R}$ such that $e^{-\omega t}\mathbf{G}^{-1}
\in\mathcal{S}'(L(E))$ ($e^{-\omega t}\mathbf{G}^{-1}
\in\mathcal{S}'^{\ast}(L(E))$).\\
\indent
Let $\alpha\in(0,\infty)\setminus \mathbb{N}$, $f\in\mathcal{S}$ and $n=\lceil\alpha\rceil$. Let us recall
that the Weyl fractional derivative $W^{\alpha}_+$ of order $\alpha$
is defined by
\begin{align*}
W^{\alpha}_+f(t):=\frac{(-1)^n}{\Gamma(n-\alpha)}\frac{d^n}{dt^n}\int\limits^{\infty}_t(s-t)^{n-\alpha-1}f(s)\,ds,
\;t\in\mathbb{R}.
\end{align*}
If $\alpha=n\in\mathbb{N}_{0}$, then we set $W^n_+:=(-1)^n\frac{d^n}{dt^n}.$
\begin{thm}\label{miana-second-degenerate}
Assume that $\alpha \geq 0$ and that ${\mathbf A}$ is the integral generator of a global $\alpha$-times integrated $C$-cosine function $(C_{\alpha}(t))_{t\geq 0}$ on $E.$
Set
\begin{align*}
\mathbf{G}_{\alpha}(\varphi)x:=\int^{\infty}_0W^{\alpha}_+\varphi(t)C_{\alpha}(t)x\,dt,\quad x\in E,\ \varphi\in\mathcal{D}.
\end{align*}
Then ${\mathbf G}$ is a pre-(C-DCF) whose integral generator contains ${\mathbf A}.$ Furthermore, if $(C_{\alpha}(t))_{t\geq 0}$
is exponentially equicontinuous, then ${\mathbf G}$ is exponential.
\end{thm}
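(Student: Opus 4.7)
The plan is to reduce the theorem to the already-established corresponding result for degenerate $C$-distribution semigroups on the phase space $E\oplus E$, running (in reverse) the argument of Theorem \ref{lokal-int-C-secondinjo}. First I would lift $(C_{\alpha}(t))_{t\geq 0}$ to a global $(\alpha+1)$-times integrated $\mathcal{C}$-semigroup on $E\oplus E$ via the standard block matrix
$$
S_{\alpha+1}(t):=\begin{pmatrix}
\int_0^t C_{\alpha}(s)\,ds & \int_0^t (t-s)C_{\alpha}(s)\,ds \\
C_{\alpha}(t)-g_{\alpha+1}(t)C & \int_0^t C_{\alpha}(s)\,ds
\end{pmatrix},\quad t\geq 0,
$$
which by Lemma \ref{2.1.1.11-mlo-ins} yields an $(\alpha+1)$-times integrated $\mathcal{C}$-semigroup whose integral generator contains $\mathcal{A}$ (with $\mathcal{A}$ as in Lemma \ref{kisinskija-second}). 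Exponential equicontinuity of $(C_{\alpha}(t))$ carries over immediately to $(S_{\alpha+1}(t))$, up to a possibly enlarged exponential rate.

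Next I would invoke the semigroup analog of the present statement (the corresponding Theorem from \cite{polugrupe-neijektivne-cds}, of which the theorem under consideration is precisely the cosine-function version): for a global $\beta$-times integrated $\mathcal{C}$-semigroup, the formula $\mathbf{x}\mapsto \int_0^{\infty}W^{\beta}_+\varphi(t)\,S_{\beta}(t)\mathbf{x}\,dt$ produces a pre-($\mathcal{C}$-DS) whose integral generator contains the integral generator of $(S_{\beta}(t))$. Applied with $\beta=\alpha+1$ and the lifted family above, this furnishes a pre-($\mathcal{C}$-DS) $\mathcal{G}$ on $E\oplus E$, exponential when $(C_{\alpha}(t))$ is.

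The decisive step is to identify $\mathcal{G}$ with the $2\times 2$ object required by Proposition \ref{5cp-deg}, namely
$$
\mathcal{G}=\begin{pmatrix}
\mathbf{G}_{\alpha} & \mathbf{G}_{\alpha}^{-1} \\
\mathbf{G}_{\alpha}^{\prime}-\delta\otimes C & \mathbf{G}_{\alpha}
\end{pmatrix}.
$$
For the diagonal entries one uses the composition rule $W^{\alpha+1}_+\varphi=W^{\alpha}_+(-\varphi^{\prime})$, an integration by parts (boundary terms vanishing because $\varphi\in\mathcal{D}$), and the fact that convolution against $g_1$ inverts $W^1_+$, to reduce $\int_0^{\infty}W^{\alpha+1}_+\varphi(t)\int_0^t C_{\alpha}(s)\,ds\,dt$ to $\int_0^{\infty}W^{\alpha}_+\varphi(t)C_{\alpha}(t)\,dt=\mathbf{G}_{\alpha}(\varphi)$. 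For the top-right entry a similar manipulation applied to $\int_0^t (t-s)C_{\alpha}(s)\,ds$ yields $\int_0^{\infty}W^{\alpha}_+ I(\varphi)(t)C_{\alpha}(t)\,dt$, which equals $-\mathbf{G}_{\alpha}(I(\varphi))=\mathbf{G}_{\alpha}^{-1}(\varphi)$ by definition \eqref{broj}. The bottom-left entry absorbs the $g_{\alpha+1}(t)C$ correction and is identified as $\mathbf{G}_{\alpha}^{\prime}-\delta\otimes C$ via $\int_0^{\infty}W^{\alpha+1}_+\varphi(t)C_{\alpha}(t)\,dt=-\mathbf{G}_{\alpha}(\varphi^{\prime})-\varphi(0)C=\mathbf{G}_{\alpha}^{\prime}(\varphi)-(\delta\otimes C)(\varphi)$, where the $\varphi(0)C$ term originates exactly from the $g_{\alpha+1}(t)C$ summand after the Weyl-derivative reduction.

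With this matrix identification in hand, Proposition \ref{5cp-deg} delivers at once that $\mathbf{G}_{\alpha}$ is a pre-(C-DCF), and Lemma \ref{kisinskija-second} transfers the inclusion of generators from the $E\oplus E$ level to $E$, giving $\mathbf{A}\subseteq \mathbf{A}_{\mathbf{G}_{\alpha}}$. Exponentiality of $\mathcal{G}$ on $E\oplus E$ translates, through the same matrix identification, into $e^{-\omega^{\prime}t}\mathbf{G}_{\alpha}^{-1}\in\mathcal{S}^{\prime}(L(E))$, which by the remark right after Definition \ref{qdfn} is equivalent to exponentiality of $\mathbf{G}_{\alpha}$. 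The main obstacle is the bookkeeping in Step 3: the fractional-derivative/fractional-integration identities $W^{\alpha+1}_+=W^{\alpha}_+\!\circ\!W^1_+$ and $W^{\alpha}_+(g_1\ast f)=W^{\alpha-1}_+ f$ (in the appropriate distributional sense on $[0,\infty)$) must be applied carefully so that the correction term $g_{\alpha+1}(t)C$ produces exactly the Dirac-mass contribution $-\delta\otimes C$ needed, and not a spurious boundary term.
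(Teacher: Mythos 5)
Your proposal is correct and follows essentially the same route as the paper: lift to the $(\alpha+1)$-times integrated $\mathcal{C}$-semigroup $(S_{\alpha+1}(t))_{t\geq 0}$ on $E\oplus E$ via Lemma \ref{2.1.1.11-mlo-ins}, invoke the semigroup analogue of the statement from \cite{polugrupe-neijektivne-cds}, and return to $E$ through Proposition \ref{5cp-deg} and Lemma \ref{kisinskija-second} — you in fact supply the matrix identification of $\mathcal{G}$ with $\bigl(\begin{smallmatrix}\mathbf{G}_{\alpha}&\mathbf{G}_{\alpha}^{-1}\\ \mathbf{G}_{\alpha}'-\delta\otimes C&\mathbf{G}_{\alpha}\end{smallmatrix}\bigr)$ that the paper leaves implicit. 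The only blemish is a sign in your intermediate expression for the top-right entry: the reduction should read $-\int_{0}^{\infty}W^{\alpha}_{+}I(\varphi)(t)C_{\alpha}(t)\,dt=-\mathbf{G}_{\alpha}(I(\varphi))=\mathbf{G}_{\alpha}^{-1}(\varphi)$, which is what your computation actually yields and does not affect the conclusion.
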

\begin{proof}
Note that if ${\mathcal A}$ is the integral generator of a global $\alpha$-times integrated $C$-semigroup $(S_{\alpha}(t))_{t\geq 0}$ on $E$ and
$\mathcal{G}_{\alpha}(\varphi)x:=\int^{\infty}_0W^{\alpha}_+\varphi(t)S_{\alpha}(t)x\,dt,$ $x\in E,$ $\varphi\in\mathcal{D}$
then ${\mathcal G}$ is a pre-(C-DS) whose integral generator contains ${\mathcal A}.$ Then by Lemma \ref{2.1.1.11-mlo-ins} and Lemma \ref{kisinskija-second} we have $\mathcal{B}:=\bigl(\begin{smallmatrix}0&I\\ {\mathcal A} &0\end{smallmatrix}\bigr)$ is a subgenerator of an $(\alpha+1)$-times integrated $C$-semigroup $(S_{\alpha+1}(t))_{t\geq0}$, hence ${\mathcal A}$ is a subgenerator of $\alpha$-times integrated $C$-cosine function $(C_{\alpha}(t))_{t\geq0}$ on $E$. Then by Proposition \ref{5cp-deg} we obtain the statement of the theorem.
\end{proof}

\begin{rem}\label{xcvbnm-prim-ght}
It is clear that ${\mathbf G}(\cdot)\equiv 0$ is a degenerate pre-distribution cosine function with the generator ${\mathcal A}\equiv E\times E,$ as well as that, for every $\tau>0$ and for every integer $n_{\tau}\in\mathbb{N},$ there exists only one local $n_{\tau}$-times integrated cosine function $(C_{n_{\tau}}(t)\equiv 0)_{t\in [0,\tau)}$ satisfying
(\ref{utf-88-second}). Then condition (B)' holds and condition (A)' does not hold here.
Designate by ${\mathbf A}_{\tau}$ the integral generator of $(C_{n_{\tau}}(t)\equiv 0)_{t\in [0,\tau)}.$
Then ${\mathrm A}_{\tau}=\{0\} \times E$ is strictly contained in the integral generator ${\mathbf A}$
of ${\mathbf G}.$
Furthermore, if
$C\neq 0,$ then
there do not exist numbers
$\tau>0$ and $n_{\tau}\in\mathbb{N}$ such that ${\mathbf A}_{\tau}$ generates (subgenerates) a local
$n_{\tau}$-times integrated $C$-cosine function.
\end{rem}

The notion of a $ q$-exponential pre-(C-DCF) (pre-(C-UDCF) of $\ast$-class) can be also introduced and further analyzed. For the sake of brevity, we shall skip all related details concerning this topic here.

We close this section with the observation that the assertions of
\cite[Theorem 3.6.13, Theorem 3.6.14]{knjigah} can be simply reformulated for non-degenerate ultradistribution sines in locally convex spaces. For more details concerning the semigroup case, the reader may consult \cite{C-ultra-dvojka}.

\section[Examples and applications]{Examples and applications}

First of all, we would like to draw the readers' attention on some instructive examples of non-degenerate ultradistribution sines in Fr\'echet spaces.

\begin{example}\label{nedegen-sinusi}
\begin{itemize}
\item[(i)]
Set $E:=\{f\in C^{\infty}([0,\infty)) : \lim
_{x\rightarrow +\infty}f^{(k)}(x)=0\mbox{ for all }k\in {{\mathbb
N}_{0}}\}.$ Equipped with the family of norms $||f||_{k}:=\sum_{j=0}^{k}\sup_{x\geq
0}|f^{(j)}(x)|,$ $f\in E$ ($k\in {{\mathbb N}_{0}}$), $E$ becomes a Fr\'echet space.
Suppose $c_{0}>0,$ $\beta>0,$ $s>1$ and $M_{p}:=p!^{s}.$ Define the operator
$A$ by $D(A):=\{u\in E : c_{0}u^{\prime}(0)=\beta u(0)\}$ and
$Au:=c_{0}u^{\prime \prime}.$ One can prove that, for every two sufficiently small number $\varepsilon>0,\ \varepsilon'>0$ and for every integer $k\in {\mathbb N}_{0},$ there exist constants $c(\varepsilon,\varepsilon')>0$ and $c(k,\varepsilon,\varepsilon')>0$ such that
\begin{equation}\label{faithdj}
\Bigl \|  (\lambda-A)^{-1}f \Bigr \|_{k}\leq c(k,\varepsilon,\varepsilon')e^{c(\varepsilon,\varepsilon')|\lambda|^{\varepsilon'}}\bigl \|f \bigr \|_{k},\quad f\in E,\ \lambda \in \Sigma_{\pi -\varepsilon}.
\end{equation}
Set for $\bar{a}>0$,
$$
{\mathbf G}(\varphi)f:=(-i)\int\limits_{\bar{a}-i\infty}^{\bar{a}
+i\infty}\lambda \hat{\varphi}(\lambda) \bigl(\lambda^{2}-A\bigr)^{-1}f\,d\lambda,\quad f\in E,\ \varphi\in {\mathcal D}^{(M_{p})}.
$$
Then ${\mathbf G}$ is an exponential pre-(EUDCF) of $(M_{p})$-class,
$\mathbf{G}(\varphi)A\subseteq A{\mathbf G}(\varphi),$ $\varphi \in {\mathcal D}^{(M_{p})}$ and $A\mathbf{G}(\varphi)f=\mathbf{G}\bigl(\varphi^{\prime \prime}\bigr)f+\varphi^{\prime}(0)f,$ $f\in E,\ \varphi \in {\mathcal D}^{\ast};$
cf.  Remark \ref{filipa-1-frim}.

Now we will prove that the condition
$(CCF_{2})$ holds. Suppose that $ {\mathbf G}(\varphi
)f+{\mathbf G}^{-1}(\varphi )g=0,$  $ \varphi \in {\mathcal D}^{(M_{p})}_{0}$ for some functions $f,\ g\in E,$ i.e.,
\begin{equation}\label{neqneq}
\int\limits_{\bar{a}-i\infty}^{\bar{a}
+i\infty}\lambda \hat{\varphi}(\lambda) \bigl(\lambda^{2}-A\bigr)^{-1}f\,d\lambda +\int\limits_{\bar{a}-i\infty}^{\bar{a}
+i\infty}\hat{\varphi}(\lambda) \bigl(\lambda^{2}-A\bigr)^{-1}g\,d\lambda=0,\quad \varphi \in {\mathcal D}^{(M_{p})}_{0}.
\end{equation}

Let us recall that any complex number $\lambda \in {\mathbb C} \setminus (-\infty,0]$ belongs to
$\rho (A),$ and that
\begin{align*}
& \bigl( \lambda -A \bigr)^{-1}f(x)= \frac{1}{2{\sqrt{c_{0}\lambda}}}\Biggl[ \int ^{x}_{0}e^{-\sqrt{\lambda/c_{0}}(x-s)}f(s)\, ds+\int
^{\infty}_{x}e^{\sqrt{\lambda/c_{0}}(x-s)}f(s)\, ds\Biggr]
\\ &+\frac{1}{c_{0}}\Biggl[
\frac{c_{0}\sqrt{\lambda/c_{0}}-\beta}{c_{0}\sqrt{\lambda/c_{0}}+\beta}\frac{1}{2\sqrt{\lambda/c_{0}}}\int^{\infty}_{0}e^{-\sqrt{\lambda/c_{0}}
s}f(s)\, ds\Biggr]e^{-\sqrt{\lambda/c_{0}} x},\ x\geq 0,\ f\in E,
\end{align*}
as well as that
$$
(-i)\int\limits_{\bar{a}-i\infty}^{\bar{a}
+i\infty}\hat{\varphi}(\lambda)e^{-\lambda t}\, d\lambda=\varphi (t),\quad \varphi\in {\mathcal D}^{(M_{p})},\ t\geq 0.
$$
It can be calculated that (see \cite[Example 4.5]{C-ultra-dvojka})
$$
\notag {\mathcal G}(\varphi)f(x)=
\frac{1}{2\sqrt{\pi c_{0}}}\int^{\infty}_{0}\varphi(t)\Biggl[ t^{(-1)/2}\int \limits^{x}_{0}e^{-s^{2}/4c_{0}t}f(x-s)\, ds \Biggr]\, dt+$$
$$+\frac{1}{2\sqrt{\pi c_{0}}}\int^{\infty}_{0}\varphi(t)\Biggl[ t^{(-1)/2}\int \limits^{\infty}_{0}e^{-s^{2}/4c_{0}t}f(x+s)\, ds \Biggr]\, dt+
$$ $$+\frac{1}{2\sqrt{c_{0}}} \int^{\infty}_{0}\varphi(t) \Biggl[ \frac{1}{2\pi i}\int\limits_{\bar{a}-i\infty}^{\bar{a}
+i\infty} e^{\lambda t}f(\lambda;x)\, d\lambda \Biggr]\, dt,\ x\geq 0,\ f\in E,\ \varphi\in {\mathcal D}^{(M_{p})}_{0},$$
where
$$
f(\lambda;x):=\Biggl[ e^{-\sqrt{\lambda  c_{0}^{-1}}x}
\frac{c_{0}\sqrt{\lambda c_{0}^{-1}}-\beta}{c_{0}\sqrt{\lambda c_{0}^{-1}}+\beta}\frac{1}{\sqrt{\lambda}}\int^{\infty}_{0}e^{-\sqrt{\lambda c_{0}^{-1}} v}f(v)\, dv \Biggr]
$$
for every $x> 0$ and for every $\lambda \in {\mathbb C}$ with $\Re \lambda =\bar{a}.$
The integrals in (\ref{neqneq}) are equal to zero. The statement that second integral is equal to zero is equivalent to ${\mathcal G}(\varphi)g=0.$ It is shown in \cite[Example 4.5]{C-ultra-dvojka} that it must be $g=0$.
Now, let the first integral be zero. By \cite[Corollary 1.6.6]{a43} and again using the same argumentation as in the previous case when ${\mathcal G}(\varphi)g=0$ we deduce that it must $f=0$.


This implies (see e.g. \cite[Theorem 3.6.14]{knjigah} for the Banach space case) that the abstract Cauchy problem
$$
(ACP_2):\left\{
\begin{array}{l}
u\in C^{\infty}([0,\infty):E)\cap C([0,\infty):[D(A)]),\\[0.1cm]
u_{tt}(t,x)=c_{0}u_{xx}(t,x),\;t\geq 0,\ x\geq 0,\\ \ u(0,x)=u_{0}(x),\;u_{t}(0,x)=u_{1}(x),\ x\geq 0
\end{array}
\right.
$$
has a unique solution for any $u_{0},\ u_{1}\in E^{(M_{p})}(A),$ where $E^{(M_{p})}(A)$ is the abstract Beurling space consisting of those functions $f\in E$ satisfying that, for every $h>0$ and $n\in {\mathbb N},$ we have $\sup_{p\in {\mathbb N}_{0}}(h^{p}\| f^{(2p)}\|_{n}/M_{p})<\infty;$
furthermore, for every compact set $K\subseteq[0,\infty)$ and for every $n\in {\mathbb N}$ and $h>0$, the solution $u$ of $(ACP_2)$ satisfies
$$
\sup_{t\in K,\;p\in\mathbb{N}_0}\frac{h^p}{M_{p}}\Biggl(\Big\|\frac{d^p}{dt^p}u(t)\Big\|_{n}+\Big\|\frac{d^p}{dt^{p+1}}u(t)\Big\|_{n}\Biggr)<\infty.
$$
Suppose now that $P(z)$ is a non-constant complex polynomial of degree $k\in {\mathbb N}$ such that there exist positive real numbers $a,\ b>0$ such that, for every $\lambda \in {\mathbb C}$ with $\Re \lambda>a,$ all the zeroes of polynomial $z\mapsto P(z)-\lambda,$ $z\in {\mathbb C}$ belong to ${\mathbb C} \setminus (-\infty,0].$ Let $\bar{a}>a.$ Then it can be easily seen that, for every two sufficiently small number $\varepsilon>0,\ \varepsilon'>0$ and for every integer $k\in {\mathbb N}_{0},$ there exist constants $c(\varepsilon,\varepsilon')>0$ and $c(k,\varepsilon,\varepsilon')>0$ such that
\begin{equation}\label{faithdj-ikngh}
\Bigl \|  \bigl(\lambda-P(A)\bigr)^{-1}f \Bigr \|_{k}\leq c(k,\varepsilon,\varepsilon')e^{c(\varepsilon,\varepsilon')|\lambda|^{\varepsilon'}}\bigl \|f \bigr \|_{k},\quad f\in E,\ \Re \lambda >\bar{a}.
\end{equation}
Set
$$
{\mathbf G}_{P}(\varphi)f:=(-i)\int\limits_{\bar{a}-i\infty}^{\bar{a}
+i\infty}\lambda \hat{\varphi}(\lambda) \bigl(\lambda^{2}-P(A)\bigr)^{-1}f\,d\lambda,\quad f\in E,\ \varphi\in {\mathcal D}^{(M_{p})}.
$$
Then ${\mathbf G}_{P}$ is an exponential pre-(EUDCF) of $(M_{p})$-class, and it is open question whether the condition $(CCF_{2})$ holds for
${\mathbf G}_{P},$ in general.\\

\item[(ii)] In this part, we use the notation from \cite[Chapter 8]{a43}. Let $p\in[1,\infty)$, $m>0$, $\rho\in[0,1]$, $r>0$, and let $a\in S_{\rho,0}^m$ satisfies $(H_r).$ Suppose that
$E=L^p(\mathbb{R}^n)$ or $E=C_0(\mathbb{R}^n)$
(in the second case, we assume $p=\infty$), $0\leq l\leq n,$ $A:=\mathrm{Op}_E(a)$
and that the following inequality
\begin{equation}\label{mh}
n\Bigl|\frac{1}{2}-\frac{1}{p}\Bigr|\Bigl(\frac{m-r-\rho+1}{r}\Bigr)<1
\end{equation}
holds.
Let su recall that if $a(\cdot)$ is an elliptic polynomial of order $m$, then \eqref{mh} holds with $m=r$ and $\rho=1.$
Suppose that there exists a sequence $(M_p)$ satisfying (M.1), (M.2) and (M.3$'$), as well as that $a(\mathbb{R}^n)\cap\Lambda_{l,\zeta,\eta}^{2}=\emptyset$ for some constants
$l\geq 1$, $\zeta>0$ and $\eta\in\mathbb{R}.$ Here
$$
\Lambda_{l,\zeta,\eta}=\bigl\{\lambda\in\mathbb{C}:\Re\lambda\geq\zeta M(l|\Im\lambda|)+\eta \bigr\} \mbox{ and } \Lambda_{l,\zeta,\eta}^{2}=\bigl\{\lambda^{2} : \lambda \in \Lambda_{l,\zeta,\eta}\bigr\}.
$$
Put ${{\mathbb
N_{0}^{l}}}:=\{\eta \in {\mathbb N_{0}^{n}} : \eta_{l+1}=\cdot \cdot
\cdot =\eta_{n}=0\}$ and
$ E_{l}:=\{ f\in E : f^{(\eta )} \in E \mbox{ for all }\eta \in
{{\mathbb N_{0}^{l}}}\}.$ Then the calibration
$(q_{\eta}(f):=||f^{(\eta )}||_{E},\ f\in E_{l};\ \eta \in {{\mathbb
N_{0}^{l}}})$ induces a Fr\' echet topology on $E_{l}$ (\cite{x263}).
Define the operator $A_{l}$ on $E_{l}$ by $D(A_{l}):=\{f\in E_{l} : \mathrm{Op}_E(a)f\in E_{l}\}$ and $A_{l}f:=\mathrm{Op}_E(a)f$
($f\in D(A_{l})$). Then we know that there exist numbers $\eta'\geq\eta ,$ $N\in {\mathbb N}$ and $M\geq 1$ such that $\Lambda_{l,\zeta,\eta'}^{2} \subseteq \rho(A_{l})$ and that for each $\eta \in {{\mathbb
N_{0}^{l}}}$ we have
$$
q_{\eta}\Bigl(R\bigl(\lambda:A_{l}\bigr)f\Bigr) \leq  M\bigl(1+|\lambda|\bigr)^{N}q_{\eta}(f),\quad \lambda \in \Lambda_{l,\zeta,\eta'}^{2},\ f\in E_{l}.
$$
Keeping in mind Theorem \ref{herbs-second} and Remark \ref{filipa-1-frim}, we get that $A_{l}$ generates an ultradistribution sine of $(M_{p})$-class in $E_{l}.$
\end{itemize}
\end{example}

\begin{example}\label{degener-c-obratne-prim}
Multiplication operators in $L^{p}$-spaces generating degenerate locally integrated cosine functions can be simply constructed following the method proposed in \cite[Example 3.2.11]{FKP} and \cite[Example 3.4.44]{knjigah}. These examples can serve for construction of non-exponential pre-(DCF)'s in Banach spaces by Theorem \ref{miana-second-degenerate}.
\end{example}

\begin{example}\label{degener-c-obratne}
Suppose that $(E,\| \cdot \|)$ is a Banach space.
In \cite[Chapter III]{faviniyagi}, A. Favini and A. Yagi have considered  the multivalued linear operators satisfying the following condition:
\begin{itemize}
\item[(PW)]
There exist finite constants $c,\ M>0$ and $\beta \in (0,1]$ such that\index{condition!(PW)}
$$
\Psi:=\Psi_{c}:=\Bigl\{ \lambda \in {\mathbb C} : \Re \lambda \geq -c\bigl( |\Im \lambda| +1 \bigr) \Bigr\} \subseteq \rho({\mathcal A})
$$
and
$$
\| R(\lambda : {\mathcal A})\| \leq M\bigl( 1+|\lambda|\bigr)^{-\beta},\quad \lambda \in \Psi.
$$
\end{itemize}
If (PW) holds, then it can be simply proved that there exists a continuous linear operator $C$ such that ${\mathcal A}^{2}$ is a subgenerator of a global once integrated $C$-cosine function that is not exponentially bounded, in general (\cite{FKP}). This example and Theorem \ref{miana-second-degenerate} can be used for construction of non-exponential pre-(C-DCF)'s in Banach spaces.
\end{example}

\section[Appendix]{Appendix: Fractionally integrated $C$-cosine functions}\label{maxx}
Here we list some definitions and statements that are used previously in the paper (most of them are already known). We refer to \cite{catania} for the definition of (local, if $\tau<\infty$) $\alpha$-times integrated $C$-cosine functions.\\
\indent Let $g_{\alpha}(t)=\frac{t^{\alpha-1}}{\Gamma({\alpha})}$, for $t>0$. Recall that a strongly continuous operator family  $((C_{\alpha})(t))_{t\in(0,\tau)}\subseteq L(E)$ is called a (local, if $\tau<\infty$) $\alpha$-times integrated $C$-cosine function iff the following holds: $(i)\,\, C_\alpha(t)C=CC_\alpha(t)$, $t\in [0,\tau),$ and \\ For all $x\in E$ and $t,\ s\in [0,\tau)$ with $t+s\in [0,\tau),$
\begin{equation}\label{composition}
\begin{aligned}
2C_\alpha(t)C_\alpha(s)x&=\Biggl(\int\limits^{t+s}_t-\int\limits^s_0\Biggr)g_{\alpha}(t+s-r)C_\alpha(r)Cx\,dr
\\[-2ex]
\!+\!\int\limits^t_{t-s}\!\! g_{\alpha}(r&-t+s)C_\alpha(r)Cx\,dr\!+\!\int\limits^s_0\!\! g_{\alpha}(r+t-s)C_\alpha(r)Cx\,dr,\;\;t\geq s;
\\
2C_\alpha(t)C_\alpha(s)x&=\Biggl(\int\limits^{t+s}_s-\int\limits^t_0\Biggr)g_{\alpha}(t+s-r)C_\alpha(r)Cx\,dr
\\[-2ex]
\!+\!\int\limits^s_{s-t}\!\! g_{\alpha}(r&+t-s)C_\alpha(r)Cx\,dr\!+\!\int\limits^t_0\!\! g_{\alpha}(r-t+s)C_\alpha(r)Cx\,dr,\;\;t<s.
\end{aligned}
\end{equation}


We refer to \cite{catania} for a (local) $C$-regularized semigroup, resp., (local) $C$-regularized cosine function.

Let $0<\alpha<\infty$. In
the case $\tau=\infty ,$ $(S_{\alpha}(t))_{t\geq 0}$ is said to be
exponentially equicontinuous\index{$(a,k)$-regularized $C$-resolvent family!exponentially equicontinuous} (equicontinuous\index{$(a,k)$-regularized $C$-resolvent family!equicontinuous}) iff there exists
$\omega \in {\mathbb R}$ ($\omega =0$) such that the family $\{
e^{-\omega t} S_{\alpha}(t) : t\geq 0\}$ is equicontinuous. The above notion can be simply understood for the class of fractionally integrated $C$-cosine functions.
The integral generator $\hat{{\mathcal A}}$ of $(S_{\alpha}(t))_{t\in [0,\tau)},$ resp. $(C_{\alpha}(t))_{t\in [0,\tau)},$ is defined by graph
\[
\hat{{\mathcal A}}:=\Biggl\{(x,y)\in E\times E:S_\alpha(t)x-g_{\alpha+1} (t)Cx=\int\limits_0^tS_\alpha(s)y\,ds,\; t\in [0,\tau) \Biggr\}, resp.,
\]

\[
\hat{{\mathcal A}}:=\Biggl\{(x,y)\in E\times E: C_\alpha(t)x-g_{\alpha+1} (t)Cx=\int\limits_0^t(t-s)C_\alpha(s)y\,ds,\; t\in [0,\tau) \Biggr\}.
\]
The integral generator $\hat{{\mathcal A}}$ of $(S_{\alpha}(t))_{t\in [0,\tau)},$ resp. $(C_{\alpha}(t))_{t\in [0,\tau)},$ is a closed MLO in $E.$
Furthermore, $\hat{{\mathcal A}}\subseteq C^{-1}\hat{{\mathcal A}}C$ in the MLO sense, with the equality in the case that
the operator $C$ is injective.

By a subgenerator of $(S_{\alpha}(t))_{t\in [0,\tau)},$ resp. $(C_{\alpha}(t))_{t\in [0,\tau)},$ we mean any MLO ${\mathcal A}$ in $E$ satisfying the following two conditions:
\begin{itemize}
\item[(A)] $S_{\alpha}(t)x-g_{\alpha+1} (t)Cx=\int_0^tS_{\alpha}(s)y\,ds,\mbox{ whenever }t\in [0,\tau)\mbox{ and }y\in {\mathcal A}x;$
\item[(B)] For all $x\in E$ and $t\in [0,\tau),$  we have $\int^{t}_{0}S_{\alpha}(s)x\, ds \in D({\mathcal A})$ and
$S_{\alpha}(t)x-g_{\alpha+1}(t)Cx\in {\mathcal A}\int_0^t S_{\alpha}(s)x\,ds,$
\end{itemize}
resp.,
\begin{itemize}
\item[(A)'] $C_{\alpha}(t)x-g_{\alpha+1} (t)Cx=\int_0^t (t-s)C_{\alpha}(s)y\,ds,\mbox{ whenever }t\in [0,\tau)\mbox{ and }y\in {\mathcal A}x;$
\item[(B)'] For all $x\in E$ and $t\in [0,\tau),$  we have $\int^{t}_{0}(t-s)C_{\alpha}(s)x\, ds \in D({\mathcal A})$ and
$C_{\alpha}(t)x-g_{\alpha+1}(t)Cx\in {\mathcal A}\int_0^t (t-s)C_{\alpha}(s)x\,ds .$
\end{itemize}
If $(S_{\alpha}^{1}(t))_{t\in [0,\tau)}\subseteq  L(E),$ resp. $(S_{\alpha}^{2}(t))_{t\in [0,\tau)}\subseteq  L(E)$
($(C_{\alpha}^{1}(t))_{t\in [0,\tau)}\subseteq  L(E),$ resp. $(C_{\alpha}^{2}(t))_{t\in [0,\tau)}\subseteq  L(E)$),
is strongly continuous and satisfies only (B), resp. (A) ((B)', resp. (A)'),
then we say that
$(S_{\alpha}^{1}(t))_{t\in [0,\tau)},$ resp. $(S_{\alpha}^{2}(t))_{t\in [0,\tau)}$ ($(C_{\alpha}^{1}(t))_{t\in [0,\tau)},$ resp. $(C_{\alpha}^{2}(t))_{t\in [0,\tau)}$),
is an $\alpha$-times integrated $C$-existence family with a subgenerator ${\mathcal A},$ resp.,
$\alpha$-times integrated $C$-uniqueness family with a subgenerator ${\mathcal A}$ ($\alpha$-times integrated $C$-cosine existence family with a subgenerator ${\mathcal A},$ resp.,
$\alpha$-times integrated $C$-cosine uniqueness family with a subgenerator ${\mathcal A}$).

By $\chi(S_{\alpha}),$ resp., $\chi(C_{\alpha}),$
we denote the set consisting of all subgenerators of $(S_{\alpha}(t))_{t\in [0,\tau)},$ resp., $(C_{\alpha}(t))_{t\in [0,\tau)}.$
It is well known (see \cite{knjigah}, \cite{catania}) that any of the sets $\chi(S_{\alpha})$ and  $\chi(C_{\alpha})$
can have infinitely many elements; if ${\mathcal A}\in \chi(S_{\alpha})$, resp. ${\mathcal A}\in \chi(C_{\alpha})$, then
${\mathcal A}\subseteq \hat{{\mathcal A}}.$ In general, the set $\chi(S_{\alpha})$ can be empty and the integral generator of $(S_{\alpha}(t))_{t\in [0,\tau)}$
need not be a subgenerator of $(S_{\alpha}(t))_{t\in [0,\tau)}$ in the case that $\tau <\infty;$ the same holds for fractionally integrated
$C$-cosine functions.
In global case, the integral generator $\hat{{\mathcal A}}$ of $(S_{\alpha}(t))_{t\geq 0},$ resp. $(C_{\alpha}(t))_{t\geq 0},$ is always its subgenerator. If ${\mathcal A}$ is a closed subgenerator of $(S_{\alpha}(t))_{t\in [0,\tau)},$ resp. $(C_{\alpha}(t))_{t\geq 0},$ defined locally  or globally, then we know that $C{\mathcal A}\subseteq {\mathcal A}C,$ $\hat{{\mathcal A}}\subseteq C^{-1}{\mathcal A}C$ and that the injectivity of $C$
implies $\hat{{\mathcal A}}= C^{-1}{\mathcal A}C.$ Suppose that $C$ is injective
and ${\mathcal A}$ is an MLO. Then there exists at most one $\alpha$-times integrated $C$-semigroup $(S_\alpha(t))_{t\in [0,\tau)},$
resp. $\alpha$-times integrated $C$-cosine function $(C_{\alpha}(t))_{t\in [0,\tau)},$ which do have ${\mathcal A}$ as a subgenerator
(\cite{catania}).

We need the following results from \cite{catania}.

\begin{lem}\label{2.1.1.11-mlo-ins}(\cite{catania})
Suppose that ${\mathcal A}$ is a closed \emph{MLO} in $E$, $0<\tau\leq\infty,$ $0\leq \alpha < \infty,$
and $(C_{\alpha}(t))_{t\in [0,\tau)}$ is a strongly continuous operator family which commutes with $C.$ Set \[
S_{\alpha+1}(t)=\begin{pmatrix}
\int_0^tC_\alpha(s)\,ds &\int_0^t(t-s)C_{\alpha}(s)\,ds\\
C_\alpha(t)-g_{\alpha+1}(t)C\; &\int_0^tC_\alpha(s)\,ds\end{pmatrix},
\;\;0\leq t<\tau
\]
and ${\mathcal C}(x\ y)^{T}:=(Cx \ Cy)^{T}$ ($x,\ y\in E$).
Then we have:
\begin{itemize}
\item[(i)] The following assertions are equivalent:

\smallskip
 \emph{(a)} {$(C_\alpha(t))_{t\in[0,\tau)}$ is an $\alpha$-times integrated $C$-cosine function on $E$.}

\smallskip
 \emph{(b)} $(S_{\alpha+1}(t))_{t\in[0,\tau)}$ is an $(\alpha+1)$-times integrated $\mathcal{C}$-semigroup \\
$(S_{\alpha+1}(t))_{t\in[0,\tau)}$ on $E\times E.$
\end{itemize}
Suppose that the equivalence relation \emph{(a)} $\Leftrightarrow$ \emph{(b)} in \emph{(i)} holds. Then we have:
\begin{itemize}
\item[(ii)] ${\mathcal A}$ is a subgenerator of $(C_\alpha(t))_{t\in[0,\tau)}$ iff
$\mathcal{B}:=\bigl(\begin{smallmatrix}0&I\\ {\mathcal A} &0\end{smallmatrix}\bigr)$
is a subgenerator of $(S_{\alpha+1}(t))_{t\in[0,\tau)}.$
\item[(iii)] Let ${\hat{\mathcal A}}$ and ${\hat{\mathcal B}}$
be the integral generators of $(C_\alpha(t))_{t\in[0,\tau)}$ and $(S_{\alpha+1}(t))_{t\in[0,\tau)}$,
respectively.  Then the inclusion $\bigl(\begin{smallmatrix}0&I\\ \hat{{\mathcal A}} &0\end{smallmatrix}\bigr) \subseteq {\hat{\mathcal B}}$ holds true. Furthermore, if $(C_{\alpha}(t))_{t\in [0,\tau)}$ is non-degenerate, then $\bigl(\begin{smallmatrix}0&I\\ \hat{{\mathcal A}} &0\end{smallmatrix}\bigr) = {\hat{\mathcal B}}.$
\end{itemize}
\end{lem}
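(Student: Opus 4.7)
My plan is to prove (i) first by translating the $(\alpha+1)$-times integrated $\mathcal{C}$-semigroup functional equation into a system of four scalar equations, one per matrix entry, and then recognising the cosine composition formula (\ref{composition}) as precisely the nontrivial condition among them. Concretely, I would compute $S_{\alpha+1}(t)S_{\alpha+1}(s)\binom{x}{y}$ by ordinary 2$\times$2 matrix multiplication, then use Fubini together with the convolution identity $g_1\ast_0 g_\alpha=g_{\alpha+1}$ to reduce each of the four components to an integral of $C_\alpha(r)Cx$ (or $C_\alpha(r)Cy$) over sub-intervals of $[0,t+s]$. Matching these against the standard form $\int_0^{t+s}g_\alpha(t+s-r)S_{\alpha+1}(r)\mathcal{C}(x,y)^T\,dr$ of the integrated-semigroup composition formula shows that the semigroup identity holds iff the four-piece identity (\ref{composition}) holds for $(C_\alpha(t))_{t\in[0,\tau)}$. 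Both directions come from the same computation read in opposite orders, and commutation with $\mathcal{C}$ is immediate from $C_\alpha(t)C=CC_\alpha(t)$.

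For (ii), assuming (i) is in hand, I would simply expand the subgenerator axioms (A) and (B) for $(S_{\alpha+1}(t))_{t\in[0,\tau)}$ with putative subgenerator $\mathcal{B}=\bigl(\begin{smallmatrix}0 & I\\ {\mathcal A} & 0\end{smallmatrix}\bigr)$ evaluated on vectors $(x,y)^T$ with $y\in\mathcal{A}x$. Reading off coordinates, the top entry of (A) reduces to the trivial identity $\int_0^t C_\alpha(s)x\,ds=\int_0^t C_\alpha(s)x\,ds$, while the bottom entry is exactly the $C$-cosine subgenerator condition (A)' for $(C_\alpha(t))$ with subgenerator $\mathcal{A}$; condition (B) similarly yields (B)'. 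The converse implication is obtained by unbundling (A)' and (B)' in the same way.

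For (iii), I would argue directly from the definitions of the two integral generators. If $(x,y)\in\hat{\mathcal{A}}$, then $C_\alpha(t)x-g_{\alpha+1}(t)Cx=\int_0^t(t-s)C_\alpha(s)y\,ds$, and combining this with the componentwise definition of $S_{\alpha+1}$ gives $S_{\alpha+1}(t)\binom{x}{0}-g_{\alpha+2}(t)\mathcal{C}\binom{x}{0}=\int_0^t S_{\alpha+1}(s)\binom{0}{y}\,ds$, so $\bigl(\binom{x}{0},\binom{0}{y}\bigr)\in\hat{\mathcal{B}}$. For equality in the non-degenerate case, I would take an arbitrary element $\bigl(\binom{x_1}{x_2},\binom{y_1}{y_2}\bigr)\in\hat{\mathcal{B}}$, write out the four scalar identities produced by the integral-generator equation, and exploit non-degeneracy of $(C_\alpha(t))$ (which forces vanishing of certain integrals of $C_\alpha(s)\cdot\,ds$ to imply vanishing of the integrand) to conclude $y_1=x_2$ and $(x_1,y_2)\in\hat{\mathcal{A}}$.

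The technical heart of the argument is the calculation in (i): the composition formula (\ref{composition}) partitions $[0,t+s]$ into four sub-intervals (governed by the case split $t\geq s$ vs.\ $t<s$), whereas the raw matrix product $S_{\alpha+1}(t)S_{\alpha+1}(s)$ naturally produces iterated integrals over the rectangle $[0,t]\times[0,s]$. Reorganising these via the substitution $r=u+v$ and $r=u-v$, and keeping track of the two triangular cases in the resulting $(u,v)$-plane, is where the bookkeeping is delicate; parts (ii) and (iii) are then essentially formal consequences once the matrix formalism of (i) is available.
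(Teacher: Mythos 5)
The paper offers no proof of this lemma: it is imported verbatim from \cite{catania} (``We need the following results from \cite{catania}''), so there is no in-paper argument to measure your proposal against. That said, your outline follows the standard route one would expect (matrix reduction of the $(\alpha+1)$-times integrated semigroup identity on $E\oplus E$ to the cosine composition formula (\ref{composition}), then coordinatewise translation of the subgenerator and integral-generator conditions), and it is viable, with three details to repair. First, the integrated-semigroup composition formula you match against is misquoted: for an $(\alpha+1)$-times integrated $\mathcal{C}$-semigroup it reads $S_{\alpha+1}(t)S_{\alpha+1}(s)z=\bigl[\int_0^{t+s}-\int_0^t-\int_0^s\bigr]g_{\alpha+1}(t+s-r)S_{\alpha+1}(r)\mathcal{C}z\,dr$, with kernel $g_{\alpha+1}$ (not $g_{\alpha}$) and with the two subtracted integrals present; without them the bookkeeping in your $(u,v)$-plane computation will not close. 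Second, in (ii) the coordinate reading is off: for $\mathcal{B}=\bigl(\begin{smallmatrix}0&I\\ \mathcal{A}&0\end{smallmatrix}\bigr)$ the relevant pairs are $\bigl(\binom{x}{y},\binom{y}{z}\bigr)$ with $z\in\mathcal{A}x$ (no constraint on $y$); the bottom entry of (A) reduces to (A)$'$ only after the terms in $y$ cancel, and the top entry is not a tautology but the once-more-integrated version of (A)$'$ --- harmless, since it is implied by (A)$'$ and the equivalence survives, but it should be stated correctly. Third, in (iii) verifying $\bigl(\binom{x}{0},\binom{0}{y}\bigr)\in\hat{\mathcal{B}}$ for $(x,y)\in\hat{\mathcal{A}}$ is not yet the inclusion $\bigl(\begin{smallmatrix}0&I\\ \hat{{\mathcal A}}&0\end{smallmatrix}\bigr)\subseteq\hat{\mathcal{B}}$: a general element of the left-hand side is $\bigl(\binom{x}{y},\binom{y}{z}\bigr)$ with $(x,z)\in\hat{\mathcal{A}}$, so you must also check $\bigl(\binom{0}{y},\binom{y}{0}\bigr)\in\hat{\mathcal{B}}$ for every $y\in E$ (a direct computation, and precisely the inclusion (\ref{phase-space-komutiranje}) the paper uses later) and then add the two pieces. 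With these corrections the plan goes through.
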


\end{document}